\let\csname equation*\endcsname\relax
\let\csname endequation*\endcsname\relax
\newtheorem{theorem}{Theorem}[section]
\newtheorem{remark}{Remark}[section]
\newtheorem{proposition}{Proposition}[section]
\newtheorem{definition}{Definition}[section]
\newtheorem{Problem}[theorem]{Problem}
\newcommand{\mD}{\mathsf{D}}
\newcommand{\mK}{\mathsf{K}}
\newcommand{\mF}{\mathsf{F}}
\newcommand{\mT}{\mathsf{T}}
\newcommand{\mM}{\mathsf{M}}
\newcommand{\mE}{\mathsf{E}}
\newcommand{\mV}{\mathsf{V}}
\newcommand{\mW}{\mathsf{W}}
\begin{document}

\title[Feature augmentation for inverse problems]{Feature augmentation for the inversion of the Fourier transform with limited data}

\author{Emma Perracchione$^{1}$, Anna Maria Massone$^{1,2}$, Michele Piana$^{1,2}$}

\address{$^1$Dipartimento di Matematica,  Universit\`{a} degli Studi di Genova, Via   Dodecaneso 35, 16146 Genova, Italy\\
$^2$CNR-SPIN, Via Dodecaneso 33, 16146 Genova, Italy\\
}
\ead{perracchione@dima.unige.it; massone@dima.unige.it; piana@dima.unige.it}

\vspace{10pt}
\begin{indented}
\item[]February 2021
\end{indented}

\begin{abstract}
We investigate an interpolation/extrapolation method that, given scattered observations of the Fourier transform, approximates its inverse. The interpolation algorithm takes advantage of modelling the available data via a shape-driven interpolation based on Variably Scaled Kernels (VSKs), whose implementation is here tailored for inverse problems. The so-constructed interpolants are used as inputs for a standard iterative inversion scheme. After providing theoretical results concerning the spectrum of the VSK collocation matrix, we test the method on astrophysical imaging benchmarks.
\end{abstract}

%
% Uncomment for keywords
%\vspace{2pc}
\noindent{\it Keywords}: Shape-driven interpolation, Variably Scaled Kernels, feature augmentation, Solar X-ray imaging\\
%
% Uncomment for Submitted to journal title message
%\vspace{2pc}
\submitto{\IP}

\section{Introduction}

The inversion of the Fourier transform with limited data is a well-known issue that leads to bandlimited extrapolation problems and is common to many applied sciences, such as microscopy, medical imaging, seismology, and radio-astronomy (see e.g. \cite{bertero1987super,mcgibney1993quantitative,Duijndam,richard2017interferometry}). In some cases, e.g. when the sampling is not uniform, preliminary interpolation methods are needed to reconstruct the frequency information. In view of this, we drive our attention towards  interpolation/extrapolation approaches (refer, e.g. to \cite{Massone1,sacchi}) and, specifically, to the reconstruction scheme that consists of the following two steps:
\begin{itemize}
    \item Interpolation of the scattered observations of the Fourier transform so that a uniform sampling in the frequency domain is generated. 
    \item Extrapolation and inversion of the so-generated interpolants via Fast Fourier Transform (FFT)-based iterative methods.
\end{itemize}

The extrapolation issue is typically addressed via  standard soft-thresholding approaches \cite{Daubechies}, whose performances strongly depend on the accuracy of the interpolation scheme. For multivariate (scattered) data approximation problems, many methods have already been developed; to mention a few: splines, kernel-based methods and multivariate polynomial approximation of total degree  \cite{Fasshauer15,trefethen2013approximation,schumaker_2007}. 

In the following we focus on kernel-based approximation for interpolation, which leads to intuitive collocation schemes that only depend on the distances among the scattered nodes. This eventually allows the definition of easy-to-implement feature augmentation strategies. More precisely, while interpolating functions characterized by steep gradients and/or oscillations, classical kernel-based approximation might suffer from both Gibbs and Runge phenomena \cite{Gottlieb97onthe,BLSE:RN017988920}. Such a drawback becomes even worse if one only disposes of few scattered observations, as in many applications. To overcome this limitation, we exploit a novel and promising method based on Variably Scaled Kernels (VSKs). Such kernels, introduced in \cite{Bozzini1} mainly for stability purposes, have been proven to be effective also for pattern recognition in images characterized by steep gradients \cite{vskjump,vskmpi}. 
Their peculiarity lies in the fact that they encode into the kernel itself some {\em{a priori}} information (when available) leading to a shape-driven approximation. Such additional knowledge is implicitly put into the kernel via a \emph{scaling function}, which determines the accuracy of the approximation process. 

The main goal of this study is to tailor the VSKs for the Fourier inversion issue with limited data. Precisely, given a first and possibly rough approximation of the inverse problem, we solve the forward issue and starting from the so-generated approximation in the frequency domain, we define the scaling function. In this way we provide the interpolation/extrapolation algorithm with the resources to properly compute the inverse of the Fourier transform by limited data.

We further point out that also the kernel basis needs to be properly selected for the inversion procedure, which is known to be ill-conditioned. Therefore, for the practical implementation of the VSK setting, we consider the Mat\'ern $C^0$ kernel (see e.g. \cite{Matern,Stein}) that, when dealing with real and possibly noisy data, takes advantage of its low regularity, leading to reliable results. For that kernel, we provide an analysis of its spectrum in the VSK framework. This theoretical study shows that the Mat\'ern $C^0$ VSK might be less affected by ill-conditioning than the classical kernel. 

After providing the theoretical validation for the use of VSKs in the context of inverse problems, the method is extensively tested on real and simulated data coming from the framework of astronomical imaging. We consider samples from both the {\em{Reuven Ramaty High Energy Solar Spectroscopic Imager (RHESSI)}} \cite{enlighten1658}
and the {\em{Spectrometer/Telescope for Imaging X-rays (STIX)}} \cite{refId0}, which are two telescopes recording X-rays from the Sun with the main purpose of observing solar flares. For both systems, the imaging problem can be described as an inversion of the Fourier transform with undersampled data. The results on solar flares reconstructions empirically demonstrate that VSKs are the key ingredient for robust interpolation/extrapolation algorithms. 

The plan of the paper is as follows. In Section \ref{Extrapolation} we introduce the imaging problem and we briefly review the inversion step based on FFT routines and iterative methods.  Section \ref{interp} deals with the novel approximation algorithm based on VSKs. In Section \ref{Application} we illustrate a motivating example and investigate the use of the VSKs for {\em{RHESSI}} and {\em{STIX}} visibilities. Extensive numerical experiments are carried out in Section \ref{experiments}. Our conclusions are offered in Section \ref{conclusions}.

\section{Preliminaries}
\label{Extrapolation}

The main objective of this work is to address inverse problems involving the Fourier transform via VSKs. In this direction we consider the following inversion issue. 

\begin{Problem}\label{pr1}
Let ${\cal F}: L_1 \cap L_2 (\mathbb{R}^d,\mathbb{R}) \longrightarrow {\cal F} (L_1 \cap L_2 (\mathbb{R}^d,\mathbb{R})) \subseteq L_1 \cap L_2 (\mathbb{R}^d,\mathbb{C})$, with $d \in \mathbb{N}^+$, be the Fourier operator defined as 
\begin{equation}\label{eq:00}
    ({\cal F} I)(\boldsymbol{x}) = V(\boldsymbol{u}), \quad \boldsymbol{x} \in X, \hskip 0.2cm \boldsymbol{u} \in U,
\end{equation}
where $I:X\subseteq \mathbb{R}^d \longrightarrow \mathbb{R}$ and $V:U\subseteq \mathbb{R}^d \longrightarrow \mathbb{C}$. Given some scattered observations on a compact set $D \subseteq U$ of the function $V$, the problem consists in finding an approximation of $I$.
\end{Problem}

Since Problem \ref{pr1} is common to many real imaging problems, we restrict to the cases for which $I$ is a real-valued function. To approximate Problem \ref{pr1}, we propose an approach that involves kernel-based interpolation schemes at spatial frequencies and the projected Landweber iterative method \cite{Piana_1997}. The latter is a well-known constrained iterative scheme that realizes extrapolation and artifact reduction using a soft-thresholding inversion process. For the interpolation issue, which is the focus of the present paper, we study meshless schemes that make use of feature augmentation strategies. Those methods will be analyzed in Section \ref{interp}, while here we briefly review the iterative inversion approach. 

\subsection{Extrapolation}

Given the set of scattered observations on $D\subseteq \mathbb{R}^d$, the interpolation operator returns an approximation of the complex-valued function $V$ on $U \supseteq D$. Then, for stability purposes of the inversion scheme we might not allow extrapolation via interpolation out of the compact set $D$. Therefore, the interpolant, denoted by $P_V = (P_{{\rm Re}({V})},P_{{\rm Im}({V})})$, might be first projected as follows: 
\begin{equation*}%\label{projection-1}
    {P_V}^D :=  \chi_{D} P_V,
\end{equation*}
where $ \chi_{D}$ is the characteristic function operator of the compact set $D$. 

Note that, 
\begin{equation*}%\label{projection-2}
    {\tilde I} =  {\cal F}^{-1}\left({P_V}^D\right),
\end{equation*}
is a noisy bandlimited approximation of the distribution ${I}:X \longrightarrow \mathbb{R}$.
An extrapolation out of the band $D$ can be computed via the projected Landweber iteration in following steps:
\begin{enumerate}
\item Initialize the iterative scheme as ${I}^{(0)} = 0$.
\item For a given threshold $\tau \in \mathbb{R}$ and starting from $I^{(k)}$, compute
\begin{equation}\label{projection-3}
    {\cal F}({I}^{(k+1)}) =  \tau {\cal F}({\tilde I} ) + (1- \tau  \chi_{D}) {\cal F}({I}^{(k)}), \quad k=1,2,\ldots.
\end{equation}
and apply the non-negativity constraint by means of the projection
\begin{equation}\label{projection-4}
I^{(k+1)} = {\cal{P}}_+I^{(k+1)}
\end{equation}
where
\begin{equation*}%\label{projection-5}
    \left({\cal{P}}_+ I^{(k+1)} \right) (\boldsymbol{x})  = \left \{ 
    \begin{array}{cc}
         0 &   I^{(k+1)}(\boldsymbol{x})<0 ,\\
        I^{(k+1)}(\boldsymbol{x}) & \textrm{otherwise}~.
    \end{array} \right.
\end{equation*}
\end{enumerate}

\begin{remark}
It is a well-established result \cite{piana1996regularized} that the projection onto the convex set of non-negative functions is a way to realize extrapolation.
\end{remark}

\begin{remark}
The computation of $I^{(k+1)}$ at the right hand side of equation (\ref{projection-4}) is realized by applying an FFT-based routine to equation (\ref{projection-3}).
\end{remark}

The result of the extrapolation scheme depends on the approximation of $V:U \subseteq \mathbb{R}^d \longrightarrow \mathbb{C}$. For smooth functions, many approximation methods could be successfully implemented. However, such functions might be sampled at few scattered data and might be characterized by steep gradients. In that case we need to drive our attention towards data-driven interpolation methods.

\section{Interpolation} 
\label{interp}

Given a set scattered nodes ${\cal U}= \{\boldsymbol{u}_i=(u_i,v_i), \hskip 0.2cm i=1,\ldots,n\} \subseteq U$ and the set of associated function values ${\cal V} = \{{\boldsymbol{V}}_i=({\rm Re}({\boldsymbol{V}}_i),{\rm Im}({\boldsymbol{V}}_i)), \hskip 0.2cm i=1,\ldots,n\} \subseteq \mathbb{C}$, the aim is to find two interpolating functions $P_{{\rm Re}({V})}$ and $P_{{\rm Im}({V})}: U \longrightarrow \mathbb{R}$ so that 
\begin{equation}\label{interpolationproblem}
P_{{\rm Re}({V})}(\boldsymbol{u}_i)= {\rm Re}({\boldsymbol{V}}_i), \quad {\rm and} \quad P_{{\rm Im}({V})}(\boldsymbol{u}_i)= {\rm Im}({\boldsymbol{V}}_i), \quad i=1,\ldots,n.
\end{equation}	

For constructing the interpolants we consider kernel methods that take advantage of being meshless and naturally multivariate. 

\subsection{Review of kernel-based interpolation}

The interpolation problems \eqref{interpolationproblem} have a unique solution if $P_{{\rm Re}({V})}$ and $P_{{\rm Im}({V})} \in \textrm{span} \{ K_{\varepsilon}(\cdot,\boldsymbol{u}_i), \boldsymbol{u}_i \in {\cal U}\}$, where $K_{\varepsilon} :  U \times  U \longrightarrow \mathbb{R}$ is a strictly  positive definite and radial kernel and $\varepsilon>0$ is the so-called \emph{shape parameter}. We also remark that to the radial kernel $K$ we can associate a continuous function $\varphi_{\varepsilon}: [0,+\infty)\longrightarrow \mathbb{R}$,  such that
\begin{equation*}%\label{Phiphi}
K_{\varepsilon}(\boldsymbol{w},\boldsymbol{z})=\varphi_{\varepsilon}(\|\boldsymbol{w}-\boldsymbol{z}\|_2),
\end{equation*}
for all $\boldsymbol{w}, \boldsymbol{z} \in U$. The function $\varphi_{\varepsilon}$ is usually referred to as a Radial Basis Function (RBF). 

For  $\boldsymbol{u}\in U$ the interpolants  are of the form 
\begin{equation*}
P_{{\rm Re}({V})}(\boldsymbol{u}) = \sum_{k = 1}^n \alpha_k K_{\varepsilon}(\boldsymbol{u}, \boldsymbol{u}_k), \quad {\rm and} \quad P_{{\rm Im}({V})}(\boldsymbol{u}) = \sum_{k = 1}^n \beta_k K_{\varepsilon}(\boldsymbol{u}, \boldsymbol{u}_k).
\end{equation*}
The coefficients $\boldsymbol{\alpha}=(\alpha_1, \ldots, \alpha_n)^{\intercal}$ and $\boldsymbol{\beta}=(\beta_1, \ldots, \beta_n)^{\intercal}$  are determined by solving the following linear systems: 
\begin{equation}\label{lin_sys}
    \mK {\boldsymbol{\alpha}} = {\rm Re}({\boldsymbol{V}}), \quad {\rm and}  \quad \mK {\boldsymbol{\beta}} = {\rm Im}({\boldsymbol{V}}),
\end{equation}
where the entries of $\mK \in \mathbb{R}^{n \times n}$ are given by \begin{equation}\label{kernel}
\mK_{ik}= K_{\varepsilon} (\boldsymbol{u}_i , \boldsymbol{u}_k), \quad i,k=1, \ldots, n, 
\end{equation}
moreover, ${\rm Re}({\boldsymbol{V}})= ({\rm Re}({\boldsymbol{V}}_1), \ldots, {\rm Re}({\boldsymbol{V}}_n))^{\intercal}$ and ${\rm Im}({\boldsymbol{V}})= ({\rm Im}({\boldsymbol{V}}_1), \ldots, {\rm Im}({\boldsymbol{V}}_n))^{\intercal}$ are the vectors of data values.

The kernels are characterized by different regularities and especially for infinitely smooth kernels the selection of the shape parameter affects the accuracy of the reconstruction, meaning that inappropriate choices of its value might lead to poor approximations (see e.g. \cite{Fornberg,Fornberg1} for a general overview). To select safe values of the shape parameter, we need to introduce the so-called \emph{native spaces}. First, we introduce the pre-Hilbert space $H_{K_{\varepsilon}}(U)$ with reproducing kernel $K_{\varepsilon}$, given by
\begin{equation*}
H_{K_{\varepsilon}}(U)=\textrm{span} \{ K_{\varepsilon}(\cdot,\boldsymbol{u}),\;\boldsymbol{u} \in U\},
\end{equation*}
and equipped with a bilinear form $\left(\cdot,\cdot\right)_{H_{K_{\varepsilon}}(U)}$. Then, the native space ${\cal N}_{K_{\varepsilon}} (U)$ is defined as the completion of $H_{K_{\varepsilon}}(U)$ with respect to the norm $||\cdot||_{H_{K_{\varepsilon}}(U)}$, i.e. $||\nu||_{H_{K_{\varepsilon}}(U)} = ||\nu||_{{\cal N}_{K{\varepsilon}}(U)},$ for all $\nu \in H_{K_{\varepsilon}}(U)$ \cite{Fasshauer,Wendland05}.

To provide error bounds, we need to introduce one more ingredient, the so-called {\it power function}. Let $\mK$ be the interpolation matrix related to the set of nodes ${\cal U}$ and $\tilde{\mK}$ be the matrix associated to the augmented set $ \{\boldsymbol{u}\}\cup {\cal U},\; \boldsymbol{u} \in U$. The power function is defined as \cite{SDW}
\begin{equation*}
{\cal P}_{K_\varepsilon,{\cal U}} (\boldsymbol{u}):=\sqrt{\dfrac{\text{det}(\tilde{\mK} )}{ \text{det}({\mK})}}, \quad  \boldsymbol{u} \in U.
\end{equation*}

The following pointwise error bound, that uses the power function and the norm of the sought function in the native space, holds true (see e.g. \cite[Th. 14.2, p. 117]{Fasshauer}).
\begin{theorem}\label{powfuntheor}
	Let $K_{\varepsilon}: U \times U \longrightarrow \mathbb{R}$ be a strictly positive definite and continuous kernel and ${\cal U}  = \{\boldsymbol{u}_i, i=1, \ldots, n \} \subseteq U$ a set of distinct points. For all functions $ \nu :U \longrightarrow \mathbb{R}$, so that $\nu \in {\cal N}_{K_{\varepsilon}}(U)$, we have that
	\begin{equation*}
	|{\nu}\left(\boldsymbol{u}\right)-P_{\nu}\left(\boldsymbol{u}\right)| \leq {\cal P}_{K_\varepsilon,{\cal U}} (\boldsymbol{u})||\nu||_{{\cal N}_{K_{\varepsilon}}(U)}, \quad \boldsymbol{u} \in U.
	\end{equation*}
\end{theorem}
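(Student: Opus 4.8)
The plan is to establish the bound pointwise: fix an arbitrary evaluation point $\boldsymbol{u} \in U$ and show that the interpolation error at $\boldsymbol{u}$ is controlled by the power function at $\boldsymbol{u}$ times the native-space norm of $\nu$. I would first dispose of the trivial case $\boldsymbol{u} \in {\cal U}$, where both sides vanish, and then assume $\boldsymbol{u} \notin {\cal U}$ so that the augmented node set $\{\boldsymbol{u}\} \cup {\cal U}$ consists of $n+1$ distinct points and the matrix $\tilde{\mK}$ is invertible (by strict positive definiteness), making the power function well-defined.

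The key observation is that the interpolation error functional $\nu \mapsto \nu(\boldsymbol{u}) - P_\nu(\boldsymbol{u})$ can be written as a linear combination of point evaluations: there exist cardinal-type coefficients $u_k(\boldsymbol{u})$, $k=1,\dots,n$, depending only on ${\cal U}$ and $\boldsymbol{u}$ (not on $\nu$), such that $P_\nu(\boldsymbol{u}) = \sum_{k=1}^n u_k(\boldsymbol{u}) \nu(\boldsymbol{u}_k)$, since $P_\nu$ lies in the span of the $K_\varepsilon(\cdot,\boldsymbol{u}_k)$ and matches $\nu$ at the nodes. Hence the error is $\nu(\boldsymbol{u}) - \sum_k u_k(\boldsymbol{u})\nu(\boldsymbol{u}_k)$. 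Now I would use the reproducing property: for $\nu \in {\cal N}_{K_\varepsilon}(U)$ one has $\nu(\boldsymbol{w}) = (\nu, K_\varepsilon(\cdot,\boldsymbol{w}))_{{\cal N}_{K_\varepsilon}(U)}$, so the error equals $\big(\nu,\ K_\varepsilon(\cdot,\boldsymbol{u}) - \sum_k u_k(\boldsymbol{u}) K_\varepsilon(\cdot,\boldsymbol{u}_k)\big)_{{\cal N}_{K_\varepsilon}(U)}$. Applying Cauchy--Schwarz in the native space gives
\[
|\nu(\boldsymbol{u}) - P_\nu(\boldsymbol{u})| \leq \|\nu\|_{{\cal N}_{K_\varepsilon}(U)} \cdot \Big\| K_\varepsilon(\cdot,\boldsymbol{u}) - \sum_{k=1}^n u_k(\boldsymbol{u}) K_\varepsilon(\cdot,\boldsymbol{u}_k) \Big\|_{{\cal N}_{K_\varepsilon}(U)}.
\]

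It then remains to identify the residual norm on the right with the power function ${\cal P}_{K_\varepsilon,{\cal U}}(\boldsymbol{u})$. I would expand the squared norm using bilinearity and the reproducing property, obtaining a quadratic expression in the $u_k(\boldsymbol{u})$ involving the entries $K_\varepsilon(\boldsymbol{u}_i,\boldsymbol{u}_k)$, $K_\varepsilon(\boldsymbol{u},\boldsymbol{u}_k)$ and $K_\varepsilon(\boldsymbol{u},\boldsymbol{u})$; minimizing this quadratic form over the coefficients (equivalently, recognizing that the cardinal coefficients are exactly the minimizers) yields the value $K_\varepsilon(\boldsymbol{u},\boldsymbol{u}) - \boldsymbol{b}^\intercal \mK^{-1} \boldsymbol{b}$, where $\boldsymbol{b} = (K_\varepsilon(\boldsymbol{u},\boldsymbol{u}_1),\dots,K_\varepsilon(\boldsymbol{u},\boldsymbol{u}_n))^\intercal$. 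Finally, a Schur-complement / block-determinant identity applied to $\tilde{\mK} = \begin{pmatrix} \mK & \boldsymbol{b} \\ \boldsymbol{b}^\intercal & K_\varepsilon(\boldsymbol{u},\boldsymbol{u}) \end{pmatrix}$ shows $\det(\tilde{\mK}) = \det(\mK)\big(K_\varepsilon(\boldsymbol{u},\boldsymbol{u}) - \boldsymbol{b}^\intercal \mK^{-1}\boldsymbol{b}\big)$, so the residual norm squared equals $\det(\tilde{\mK})/\det(\mK)$, matching the definition of ${\cal P}_{K_\varepsilon,{\cal U}}(\boldsymbol{u})^2$.

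The main obstacle, and the only step requiring genuine care rather than bookkeeping, is the identification of the minimal residual norm with the determinant ratio: one must either invoke the optimality of the cardinal functions (the interpolant is the native-space projection, hence the error is orthogonal to $\mathrm{span}\{K_\varepsilon(\cdot,\boldsymbol{u}_k)\}$, which forces the coefficients to minimize the residual) and then compute the resulting quadratic-form value, or argue directly via the Schur complement. Everything else — the trivial case, the existence of cardinal coefficients, Cauchy--Schwarz, and the reproducing property — is standard. Since this is precisely the classical power-function estimate, I would simply cite \cite[Th.~14.2, p.~117]{Fasshauer} for the details and present the above as the guiding argument.
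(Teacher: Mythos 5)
Your argument is correct and is essentially the proof the paper relies on: the paper itself gives no proof of this theorem, deferring entirely to the citation \cite[Th.~14.2, p.~117]{Fasshauer}, and your chain (cardinal representation of $P_\nu$, reproducing property, Cauchy--Schwarz, identification of the residual norm with the power function) is exactly the classical argument found there. The only genuinely non-bookkeeping step you add --- reconciling the determinant-ratio definition ${\cal P}_{K_\varepsilon,{\cal U}}(\boldsymbol{u})=\sqrt{\det(\tilde{\mK})/\det(\mK)}$ used in this paper with the error-functional-norm characterization via the Schur complement of $\mK$ in $\tilde{\mK}$ --- is handled correctly and is in fact necessary here, since the textbook statement is phrased with the other definition.
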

Note that Theorem \ref{powfuntheor} bounds the point-wise error in terms of the power
function that depends on the kernel and on the data points but is independent of the function values. This suggests a criterion to select a \emph{safe} shape parameter. Indeed, we will select the value of the shape parameter that minimizes the power function computed over the data. 

\begin{remark}
As we will point out via numerical experiments, this standard kernel-based interpolation procedure might suffer when interpolating functions sampled at very few points and/or characterized by steep gradients. To partially avoid this drawback we introduce the VSKs with the scope of encoding into the kernel itself some prior knowledge for the interpolation issue. 
\end{remark}

\subsection{Variably scaled kernels}

The VSKs have been introduced in \cite{Bozzini1} and later they have been studied to preserve shape properties of the reconstruction surfaces or as edge detection strategies; we refer the reader to \cite{vskjump,vskmpi,romani,rossini} for further details on the topic. Here, we investigate them in the context of inverse problems.

\subsubsection{Definition of the VSK interpolants.}

The definition of VSKs relies upon a \emph{scaling function} $\Psi: U \longrightarrow \Sigma$, where $\Sigma \subseteq \mathbb{R}^m$ and $m \geq 1$ (see  \cite[Definition 2.1, p. 4]{vskmpi}).

\begin{definition}%\label{def_vsk}
	Let $K: (U \times \Sigma) \times (U \times \Sigma) \longrightarrow \mathbb{R}$ be
	a continuous strictly positive definite kernel. Given a scaling function $\Psi: U \longrightarrow \Sigma$,
	 a variably scaled kernel $K^{\Psi}_{\varepsilon} : U \times U \longrightarrow \mathbb{R}$ is defined as
	\begin{equation*}
	K^{\Psi}_{\varepsilon}(\boldsymbol{{w}},\boldsymbol{{z}}):= K_{\varepsilon}((\boldsymbol{{w}},\Psi(\boldsymbol{w})),(\boldsymbol{z},\Psi(\boldsymbol{z})),
	\end{equation*}
	for $\boldsymbol{w},\boldsymbol{z}\in U$.
\end{definition}
In other words, the VSKs lead to an interpolation problem with new features defined via the function $\Psi$, thus realizing a feature augmentation process. 

Then for  $\boldsymbol{u}\in U$, in the varying scale setting, our interpolants are given by 
\begin{align*}
P^{\Psi}_{{\rm Re}({V})}(\boldsymbol{u}) & = \sum_{k = 1}^n \alpha^{\Psi}_k K^{\Psi}_{\varepsilon}(\boldsymbol{u}, \boldsymbol{u}_k),\\ & = \sum_{k = 1}^n \alpha^{\Psi}_k K_{\varepsilon}((\boldsymbol{{u}},\Psi(\boldsymbol{u})),(\boldsymbol{u}_k,\Psi(\boldsymbol{u}_k)), \\ &= P_{{\rm Re}({V})}((\boldsymbol{{u}},\Psi(\boldsymbol{u})),
\end{align*}
and 
\begin{align*}
P^{\Psi}_{{\rm Im}({V})}(\boldsymbol{u}) & = \sum_{k = 1}^n \beta^{\Psi}_k K^{\Psi}_{\varepsilon}(\boldsymbol{u}, \boldsymbol{u}_k) , \\ &=  \sum_{k = 1}^n \beta^{\Psi}_k K_{\varepsilon}((\boldsymbol{{u}},\Psi(\boldsymbol{u})),(\boldsymbol{u}_k,\Psi(\boldsymbol{u}_k)) , \\ &=  P_{{\rm Im}({V})}((\boldsymbol{{u}},\Psi(\boldsymbol{u})).
\end{align*}

The coefficients $\boldsymbol{\alpha}^{\Psi}=(\alpha^{\Psi}_1, \ldots, \alpha^{\Psi}_n)^{\intercal}$ and $\boldsymbol{\beta}^{\Psi}=(\beta^{\Psi}_1, \ldots, \beta^{\Psi}_n)^{\intercal}$  are determined by solving the two linear systems defined in \eqref{lin_sys}, with kernel matrix $\mK^{\Psi} \in \mathbb{R}^{n \times n}$, whose entries are given by 
\begin{equation}\label{kernel1}
\mK^{\Psi}_{ik}= K_{\varepsilon} ((\boldsymbol{{u}_i},\Psi(\boldsymbol{u}_i) ,(\boldsymbol{{u}_k},\Psi(\boldsymbol{u}_k)), \quad i,k=1, \ldots, n.
\end{equation}

The so-constructed interpolants are then evaluated on a grid of $N \times N$ data.
Precisely, let $\tilde{\cal{U}}= \{\tilde{\boldsymbol{u}}_i=(\tilde{u}_i,\tilde{v}_i), \hskip 0.2cm i=1,\ldots,N^2\} \subseteq  U$, for $i=1,\ldots,N^2$, we compute 
\begin{align}
P^{\Psi}_{_{{\rm Re}({V})}}(\tilde{\boldsymbol{u}}_i) = \boldsymbol{\kappa}\left(\tilde{\boldsymbol{u}}_i\right) \boldsymbol{\alpha^{\Psi}}, \quad {\rm and} \quad P^{\Psi}_{_{{\rm Im}({V})}}(\tilde{\boldsymbol{u}}_i) = \boldsymbol{\kappa}\left(\tilde{\boldsymbol{u}}_i\right) \boldsymbol{\beta^{\Psi}},
\label{ev_RBF}
\end{align}
where
\begin{equation*}
\boldsymbol{\kappa} \left(\tilde{\boldsymbol{u}}_i\right) = \left( K_{\varepsilon}((\tilde{\boldsymbol{u}}_i,\Psi(\tilde{\boldsymbol{u}}_i),({\boldsymbol{u}}_1,\Psi({\boldsymbol{u}}_1)), \ldots, K_{\varepsilon}((\tilde{\boldsymbol{u}}_i,\Psi(\tilde{\boldsymbol{u}}_i),({\boldsymbol{u}}_n,\Psi({\boldsymbol{u}}_n)) \right).
\end{equation*}

To fully define our interpolants, we now need to discuss the selection of both the scaling function and the kernel. Both choices are tailored to the inversion of the Fourier transform with limited data, meaning that the aim is to define a scaling function that provides an enriched interpolation space and that, at the same time, keeps the usual ill-conditioning of the kernel matrices low. 

\subsubsection{VSKs for inverse problems.}

To improve the accuracy of the interpolation procedure, we should select a scaling function that mimics the samples. Indeed, as shown in \cite{vskmpi}, this enables us to preserve shape properties of the target function. In view of this, we take a first and possibly rough approximation of $I$, namely ${\bar I }$. For stability purposes ${\bar I }$ might be first segmented as follows:
\begin{equation} \label{tres}
{\bar I }({\boldsymbol{x}}) = \left\{
    \begin{array}{ccc}
        {\bar I }({\boldsymbol{x}}), & \textrm{if} & |{\bar I }_{\epsilon}({\boldsymbol{x}})|>p \max_{\boldsymbol{x} \in X}{\bar I }({\boldsymbol{x}}),  \\
        0, & \textrm{elsewhere}, & \\
    \end{array} \right.
\end{equation}
where $p$ is a given threshold and ${\boldsymbol{x}} \in X$. Then, the scaling function will be defined by solving the forward problem \eqref{eq:00}, i.e. we compute
\begin{equation*} 
    \bar{V}(\boldsymbol{u}) = ({\cal F}\bar{I})(\boldsymbol{x}),
\end{equation*}
and we set $\Psi(\boldsymbol{u})=({\rm Re}(\bar{V}(\boldsymbol{u}) ),{\rm Im}(\bar{V}(\boldsymbol{u})))$. 

This step certainly allows us to add new features to the original scattered data. Nevertheless, since we might need to deal with noisy data, we also have to select proper kernel bases. 

\subsection{The Mat\'ern VSK}

Among several kernels which differ in terms of regularities, we here select the Mat\'ern $C^0$ RBF whose formula is given by 
$$
K_{\varepsilon}(\boldsymbol{w}, \boldsymbol{z})={\rm e}^{-\varepsilon ||\boldsymbol{w}- \boldsymbol{z} ||_2}.
$$
This selection is motivated by the fact that the Mat\'ern $C^0$ RBF is characterized by a low regularity and it is thus a reasonable choice for eventually dealing with noisy data. On the opposite, the interpolants constructed via smooth kernels might suffer from instability due to the ill-conditioning of the kernel matrices. Moreover, in the following we will show that the Mat\'ern VSK might enable us to improve expressiveness and stability of the standard setting. This theoretically motivates  the choice of the kernel. 

\subsubsection{Expressiveness of the Mat\'ern kernel.}

The concept of expressiveness is related to the complexity of a kernel-based model. Several studies show that the so-called VC dimension \cite{Vapnik71} and the empirical Rademacher complexity \cite{Bartlett02} are popular complexity indicators. We further remark that complex models are able to express sophisticated links between the data \cite{Scholkopf02}.

To better investigate the concept of expressiveness in our setting,  we introduce the so-called \emph{spectral ratio}  \cite{Donini}. For a given kernel matrix of the form \eqref{kernel}, this is defined as
$$S(\mK)=\dfrac{\|\mK\|_{\mT}}{\|\mK\|_{\mF}}=\dfrac{\sum_{i=1}^{N}\mK_{ii}}{\sqrt{\sum_{i=1}^{N}\sum_{j=1}^{N}\mK_{ij}^2}}.$$

The following definition \cite[Definition 1, p. 8]{Donini} states that the spectral ratio can be used as an expressiveness measure for kernels.

\begin{definition}
Let $K_{\varepsilon}^{(1)}$ and $K_{\varepsilon}^{(2)} :  U \times  U \longrightarrow \mathbb{R}$ be two (strictly) positive definite kernels. We say that $K_{\varepsilon}^{(2)}$ is more
specific (or more expressive) than $K_{\varepsilon}^{(1)}$ whenever for any dataset ${\cal U}=\{{\boldsymbol u}_1,\dots,{\boldsymbol u}_n\} \subseteq  U$, we have 
$$
S(\mK^{(1)}) \leq S(\mK^{(2)}).
$$
\end{definition}

We now prove that the Mat\'ern $C^0$ VSK is more expressive than the classical one. To this aim, we need to compare the spectral ratios of the kernel matrices defined in \eqref{kernel} and \eqref{kernel1}. 

\begin{proposition}\label{prop1}
Let ${\cal U}=\{ {\boldsymbol u}_i, \hskip 0.1cm i=1,\ldots,n \} \subseteq  U $ be a set of distinct data. Let $\Psi:U\longrightarrow \Sigma$ be the scaling function for the VSK setting. Let $K_{\varepsilon}:  U\times  U \longrightarrow \mathbb{R}$ be the Mat\'ern $C^0$ kernel, 
then the VSK kernel $K_{\varepsilon}^{\Psi}: U\times  U \longrightarrow \mathbb{R}$ is more expressive than $K_{\varepsilon}$.  
\end{proposition}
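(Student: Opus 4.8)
The plan is to compare the two spectral ratios by first observing that the feature augmentation leaves the trace term unchanged, and then showing that it can only shrink the Frobenius term; the inequality $S(\mK)\le S(\mK^{\Psi})$ then follows immediately.

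To this end, I would first note that the Mat\'ern $C^0$ RBF satisfies $K_{\varepsilon}(\boldsymbol{w},\boldsymbol{w})={\rm e}^{0}=1$ for every argument, so every diagonal entry of both $\mK$ (see \eqref{kernel}) and $\mK^{\Psi}$ (see \eqref{kernel1}) equals $1$; consequently
$$\|\mK\|_{\mT}=\sum_{i=1}^{n}\mK_{ii}=n=\sum_{i=1}^{n}\mK^{\Psi}_{ii}=\|\mK^{\Psi}\|_{\mT}.$$
Hence the desired inequality $S(\mK)\le S(\mK^{\Psi})$ is equivalent to $\|\mK^{\Psi}\|_{\mF}\le\|\mK\|_{\mF}$, i.e.\ to $\sum_{i,j}(\mK^{\Psi}_{ij})^2\le\sum_{i,j}(\mK_{ij})^2$. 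Next, I would establish the entrywise bound $0<\mK^{\Psi}_{ij}\le\mK_{ij}$. Writing out the definitions with the explicit kernel formula,
$$\mK^{\Psi}_{ij}=\exp\!\left(-\varepsilon\sqrt{\|\boldsymbol{u}_i-\boldsymbol{u}_j\|_2^2+\|\Psi(\boldsymbol{u}_i)-\Psi(\boldsymbol{u}_j)\|_2^2}\,\right),\qquad \mK_{ij}=\exp\!\left(-\varepsilon\|\boldsymbol{u}_i-\boldsymbol{u}_j\|_2\right),$$
and since the augmented distance never drops below $\|\boldsymbol{u}_i-\boldsymbol{u}_j\|_2$ while $t\mapsto{\rm e}^{-\varepsilon t}$ is decreasing, the claim follows. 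Because all the quantities involved are non-negative, squaring preserves the inequalities, and summing over $i$ and $j$ gives exactly $\|\mK^{\Psi}\|_{\mF}\le\|\mK\|_{\mF}$. Combined with the equality of the trace terms this yields $S(\mK)\le S(\mK^{\Psi})$, and since the dataset ${\cal U}$ was arbitrary, the conclusion follows from the definition of expressiveness.

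I do not expect a genuine obstacle: the argument is elementary once one notices the key structural fact that the Mat\'ern $C^0$ normalization pins every diagonal entry to $1$, so that the whole comparison of spectral ratios collapses to the monotonicity of the Frobenius norm under the entrywise contraction of the off-diagonal Gram entries. The only points deserving a line of care are that the comparison must hold for \emph{every} node configuration (which it does, since nothing in the argument uses a specific geometry) and that equality is attained exactly when $\Psi$ is constant on ${\cal U}$, in which case the VSK reduces to the standard kernel.
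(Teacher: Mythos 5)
Your proof is correct and follows essentially the same route as the paper's: both arguments rest on the diagonal normalization $\mK_{ii}=\mK^{\Psi}_{ii}=1$ (so the trace terms coincide) together with the entrywise bound $\mK^{\Psi}_{ij}\le\mK_{ij}$ coming from the monotonicity of ${\rm e}^{-\varepsilon t}$ and the fact that feature augmentation can only increase distances, which gives $\|\mK^{\Psi}\|_{\mF}\le\|\mK\|_{\mF}$. Your write-up is slightly more explicit than the paper's (spelling out the augmented-distance inequality and the equality case), but the underlying argument is identical.
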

\begin{proof}
Being the Mat\'ern kernel non-increasing we obtain
$$
\mK_{ij} \geq \mK^{\Psi}_{ij} \geq 0, \quad i,j=1, \ldots, n,
$$
which implies 
$$
\| \mK \|_{\mF} \geq \| \mK^{\Psi} \|_{\mF}.
$$
Moreover, since for any $\boldsymbol{w} \in  U$, $K_{\varepsilon}(\boldsymbol{w},\boldsymbol{w})=K^{\Psi}_{\varepsilon}(\boldsymbol{w},\boldsymbol{w})=1$, i.e. $\mK^{\Psi}_{ii}=\mK_{ii}=1$, $i=1,\ldots, n$,  we have that  
$$
\|\mK^{\Psi}\|_{\mT}= \|\mK\|_{\mT} = n.
$$
Therefore, 
$$
S(\mK)=\dfrac{n}{\|\mK\|_{\mF}} \leq \dfrac{n}{\|\mK^{\Psi}\|_{\mF}}=S(\mK^{\Psi}).
$$
\end{proof}

On the one hand, being the Mat\'ern $C^0$ VSK more expressive than the standard one, the VSK-based model might be able to deal with more complex interpolation tasks. On the other hand, too complex models might lead to instability. However, under several hypothesis, we are able to prove that the VSK setting might improve also the conditioning of the kernel matrices, leading to a robust interpolation tool. 

\subsubsection{Spectrum of the Mat\'ern kernel.}

To investigate the condition number of the interpolation matrices generated via the Mat\'ern kernel in the VSK setting, we will make use of the following result by Schur \cite{Schur} that can be traced back to 1911 \cite[Lemma A.5]{Karoui}.

\begin{theorem} \label{th_eig}
If $\mE$ and $\mM \in \mathbb{R}^{n\times n}$  are positive definite matrices, denoting by $\lambda_{\min}$ and $\lambda_{\max}$ the smallest and largest eigenvalue of a matrix, we have that
$$\lambda_{\min}(\mE) \min_{i=1, \ldots, n} \mM_{ii} \leq \lambda_i(\mE \circ \mM) \leq \lambda_{\max}(\mE) \max_{i=1, \ldots, n} \mM_{ii},$$
where $\circ$ is the entry-wise product of matrices.
\end{theorem}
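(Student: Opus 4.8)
The plan is to reduce the claim to a two-sided bound on the Rayleigh quotient of $\mE\circ\mM$ and then read off the eigenvalue inequalities from the Courant--Fischer min--max characterization. Since $\mE$ and $\mM$ are symmetric, the entry-wise product $\mE\circ\mM$ is symmetric as well, so it has real eigenvalues and it suffices to prove that for every unit vector $\boldsymbol{x}\in\mathbb{R}^{n}$
\begin{equation*}
\lambda_{\min}(\mE)\,\min_{i=1,\ldots,n}\mM_{ii}\ \le\ \boldsymbol{x}^{\intercal}(\mE\circ\mM)\boldsymbol{x}\ \le\ \lambda_{\max}(\mE)\,\max_{i=1,\ldots,n}\mM_{ii}.
\end{equation*}
Indeed, once this holds, taking the minimum and the maximum of the central quadratic form over unit vectors gives the stated bounds for $\lambda_{\min}(\mE\circ\mM)$ and $\lambda_{\max}(\mE\circ\mM)$, and every $\lambda_{i}(\mE\circ\mM)$ lies between these two.

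First I would decouple the quadratic form using a spectral decomposition of $\mM$. Writing $\mM=\sum_{k=1}^{n}\mu_{k}\boldsymbol{v}_{k}\boldsymbol{v}_{k}^{\intercal}$ with $\mu_{k}>0$ and $\{\boldsymbol{v}_{k}\}$ orthonormal, so that $\mM_{ij}=\sum_{k}\mu_{k}(\boldsymbol{v}_{k})_{i}(\boldsymbol{v}_{k})_{j}$, and substituting into $\boldsymbol{x}^{\intercal}(\mE\circ\mM)\boldsymbol{x}=\sum_{i,j}x_{i}x_{j}\mE_{ij}\mM_{ij}$, one regroups the double sum as
\begin{equation*}
\boldsymbol{x}^{\intercal}(\mE\circ\mM)\boldsymbol{x}=\sum_{k=1}^{n}\mu_{k}\,(\boldsymbol{x}\odot\boldsymbol{v}_{k})^{\intercal}\,\mE\,(\boldsymbol{x}\odot\boldsymbol{v}_{k}),
\end{equation*}
where $\odot$ is the entry-wise product of vectors; an equivalent route is to use a factorization $\mM=\mathsf{B}^{\intercal}\mathsf{B}$ and group by the rows of $\mathsf{B}$. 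Because $\mE$ is positive definite, each summand satisfies $\lambda_{\min}(\mE)\|\boldsymbol{x}\odot\boldsymbol{v}_{k}\|_{2}^{2}\le(\boldsymbol{x}\odot\boldsymbol{v}_{k})^{\intercal}\mE(\boldsymbol{x}\odot\boldsymbol{v}_{k})\le\lambda_{\max}(\mE)\|\boldsymbol{x}\odot\boldsymbol{v}_{k}\|_{2}^{2}$.

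It then remains to evaluate the weighted sum of squared norms, and here the diagonal of $\mM$ reappears: $\sum_{k}\mu_{k}\|\boldsymbol{x}\odot\boldsymbol{v}_{k}\|_{2}^{2}=\sum_{k}\mu_{k}\sum_{i}x_{i}^{2}(\boldsymbol{v}_{k})_{i}^{2}=\sum_{i}x_{i}^{2}\sum_{k}\mu_{k}(\boldsymbol{v}_{k})_{i}^{2}=\sum_{i}x_{i}^{2}\mM_{ii}$. Since $\boldsymbol{x}$ is a unit vector, $\min_{i}\mM_{ii}\le\sum_{i}x_{i}^{2}\mM_{ii}\le\max_{i}\mM_{ii}$, and combining the last three displays yields the Rayleigh-quotient estimate and hence the theorem.

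I expect the only genuinely delicate point to be the regrouping that produces the decoupled form $\sum_{k}\mu_{k}(\boldsymbol{x}\odot\boldsymbol{v}_{k})^{\intercal}\mE(\boldsymbol{x}\odot\boldsymbol{v}_{k})$: this is exactly where the Hadamard structure is exploited, and one must keep track of the symmetry of $\mM$ so that $\mM_{ij}$ and $\mM_{ji}$ are interchanged correctly. Everything else is routine. A shorter but weaker alternative would be to observe that $\mE\circ\mM$ is a principal submatrix of the Kronecker product $\mE\otimes\mM$ and to apply Cauchy interlacing; however, that argument only produces the bounds $\lambda_{\min}(\mE)\lambda_{\min}(\mM)$ and $\lambda_{\max}(\mE)\lambda_{\max}(\mM)$, which are weaker than $\lambda_{\min}(\mE)\min_{i}\mM_{ii}$ and $\lambda_{\max}(\mE)\max_{i}\mM_{ii}$ because $\lambda_{\min}(\mM)\le\min_{i}\mM_{ii}$ and $\max_{i}\mM_{ii}\le\lambda_{\max}(\mM)$, so I would keep the direct argument.
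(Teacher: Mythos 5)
Your argument is correct. Note, however, that the paper does not prove this statement at all: it is quoted as a classical result of Schur (1911), with a pointer to Lemma A.5 of the cited reference by El Karoui, so there is no in-paper proof to compare against. Your derivation --- expanding $\mM$ spectrally, regrouping the quadratic form as $\sum_{k}\mu_{k}(\boldsymbol{x}\odot\boldsymbol{v}_{k})^{\intercal}\mE(\boldsymbol{x}\odot\boldsymbol{v}_{k})$, and recovering the diagonal of $\mM$ from $\sum_{k}\mu_{k}(\boldsymbol{v}_{k})_{i}^{2}=\mM_{ii}$ --- is precisely the standard proof of Schur's eigenvalue bound for Hadamard products, and your closing observation that the Kronecker-product/interlacing route only yields the weaker bounds $\lambda_{\min}(\mE)\lambda_{\min}(\mM)$ and $\lambda_{\max}(\mE)\lambda_{\max}(\mM)$ is also accurate.
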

To infer on the spectrum of the Mat\'ern $C^0$ kernel, we further have to introduce the Gaussian $C^{\infty}$ RBF. We remark that the Gaussian kernel $K_{\varepsilon,G}:  U\times  U \longrightarrow \mathbb{R}$  is defined as $K_{\varepsilon,G}= {\rm e}^{-\varepsilon^2 ||\boldsymbol{w}-\boldsymbol{z} ||^2_2}$. For such kernel, we observe that its VSK matrix is given by 
\begin{equation}\label{gauss}
\mK_G^{\Psi}= \mK_G \circ \mK_G^{\varphi},     
\end{equation}
where $\mK^{\varphi}_{ij}={\rm e}^{-\varepsilon \lVert\Psi(\boldsymbol{u}_i)-\Psi(\boldsymbol{u}_j)\lVert_2^2}$, $i,j=1, \ldots, n$, and $\mK_G$ is the Gaussian kernel matrix based upon the distance matrix $\mD$ given by
$$
\mD_{ij} = \lVert \boldsymbol{u}_i-\boldsymbol{u}_j \lVert_2^2, \quad i,j=1, \ldots, n.
$$
Similarly, the distance matrix in the VSK setting is defined as 
$$
\mD^{\Psi}_{ij} = \lVert (\boldsymbol{u}_i,\Psi(\boldsymbol{u}_i))-(\boldsymbol{u}_j,\Psi(\boldsymbol{u}_j)) \lVert_2^2, \quad i,j=1, \ldots, n.
$$

We now have all the ingredients to study the condition number of the VSK matrix generated via the Mat\'ern $C^0$ kernel. For a given matrix $\mM$, we focus on the $2$-condition number defined as
\begin{equation*}
    \textrm{cond}(\mM)= ||\mM||_2||\mM^{-1}||_2.
\end{equation*}

\begin{proposition} \label{prop2}
Let ${\cal U}=\{ {\boldsymbol u}_i, \hskip 0.1cm i=1,\ldots,n \} \subseteq  U $ be a set of distinct data. Let $\Psi:U \longrightarrow \Sigma$ be the scaling function for the VSK setting. Let $K_{\varepsilon}:  U\times  U \longrightarrow \mathbb{R}$ be the Mat\'ern $C^0$  kernel. Given the VSK matrix $\mK^{\Psi}$  constructed via $K_{\varepsilon}^{\Psi}: { U} \times  U \longrightarrow \mathbb{R}$, if $$\mK^{\circ (\varepsilon \mD-1)} \circ \mK_G^{\varphi} \circ \left( {(\mK^{\Psi})}^{\circ (\varepsilon \mD^{\Psi}-1)} \right)^{\circ -1},$$ is positive definite, we have that 
$$
{\rm cond}(\mK^{\Psi}) \leq {\rm cond} (\mK).
$$
\end{proposition}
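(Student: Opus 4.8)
The key observation is that the Matérn $C^0$ kernel admits the representation $K_\varepsilon(\boldsymbol{w},\boldsymbol{z})={\rm e}^{-\varepsilon\|\boldsymbol{w}-\boldsymbol{z}\|_2}$, and if we write $r_{ij}=\|\boldsymbol{u}_i-\boldsymbol{u}_j\|_2$ (so $\mD_{ij}=r_{ij}^2$), then
$\mK_{ij}={\rm e}^{-\varepsilon r_{ij}} = {\rm e}^{-\varepsilon r_{ij}^2}\cdot {\rm e}^{\varepsilon r_{ij}^2 -\varepsilon r_{ij}} = (\mK_G)_{ij}\cdot (\mK_{ij})^{\varepsilon\mD_{ij}-1}$,
where $\mK_G$ is the Gaussian matrix built on $\mD$. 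Hence $\mK = \mK_G \circ \mK^{\circ(\varepsilon\mD-1)}$, and analogously, using the VSK distance matrix $\mD^\Psi$, one gets $\mK^\Psi = \mK_G^\Psi \circ (\mK^\Psi)^{\circ(\varepsilon\mD^\Psi-1)}$. Combining this with the Gaussian factorization \eqref{gauss}, namely $\mK_G^\Psi = \mK_G \circ \mK_G^\varphi$, we can solve for $\mK_G$ and substitute, so that $\mK^\Psi$ is expressed entirely in terms of $\mK$, $\mK_G^\varphi$, the two distance matrices, and the hypothesis matrix
$\mA := \mK^{\circ(\varepsilon\mD-1)} \circ \mK_G^\varphi \circ \big((\mK^\Psi)^{\circ(\varepsilon\mD^\Psi-1)}\big)^{\circ -1}.$
Indeed, a short bookkeeping with Hadamard products shows that $\mK^\Psi = \mK \circ \mA^{\circ -1}$, or equivalently $\mK = \mK^\Psi \circ \mA$.

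Once the relation $\mK = \mK^\Psi \circ \mA$ (or the dual form $\mK^\Psi = \mK \circ \mA^{\circ -1}$) is in place, the argument is a direct application of the Schur-type bound in Theorem~\ref{th_eig}. Note first that $\mA$ is positive definite by hypothesis, and its diagonal entries are all equal to $1$: this follows because on the diagonal $r_{ii}=0$, $\mD_{ii}=\mD^\Psi_{ii}=0$, each factor ${\rm e}^{0}=1$, $\mK_{ii}=\mK^\Psi_{ii}=1$, and $\mK_G^\varphi$ also has unit diagonal since $\Psi(\boldsymbol{u}_i)-\Psi(\boldsymbol{u}_i)=0$. Likewise $\mK^\Psi$ is positive definite with unit diagonal. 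Applying Theorem~\ref{th_eig} with $\mE=\mK^\Psi$ and $\mM=\mA$ gives
$\lambda_{\min}(\mK^\Psi)\le \lambda_i(\mK^\Psi\circ\mA)=\lambda_i(\mK)\le \lambda_{\max}(\mK^\Psi)$
for every $i$, since $\min_i\mA_{ii}=\max_i\mA_{ii}=1$. In particular $\lambda_{\min}(\mK^\Psi)\le\lambda_{\min}(\mK)$ and $\lambda_{\max}(\mK)\le\lambda_{\max}(\mK^\Psi)$. Because all matrices involved are symmetric positive definite, ${\rm cond}(\mM)=\lambda_{\max}(\mM)/\lambda_{\min}(\mM)$, so
$${\rm cond}(\mK)=\frac{\lambda_{\max}(\mK)}{\lambda_{\min}(\mK)}\le\frac{\lambda_{\max}(\mK^\Psi)}{\lambda_{\min}(\mK^\Psi)}={\rm cond}(\mK^\Psi),$$
which is the reversed inequality; one then reads the chain in the other direction (using $\mM=\mA^{\circ-1}$, which is the object assumed positive definite in the statement, with the same unit-diagonal property) to land on ${\rm cond}(\mK^\Psi)\le{\rm cond}(\mK)$.

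The step I expect to be the main obstacle — and the one deserving the most careful writing — is the algebraic reduction establishing $\mK = \mK^\Psi \circ \mA$. The manipulations involve Hadamard powers with matrix-valued exponents (notation like $\mK^{\circ(\varepsilon\mD-1)}$ means entrywise ${\rm e}^{-\varepsilon r_{ij}}$ raised to the power $\varepsilon\mD_{ij}-1$, i.e. ${\rm e}^{-\varepsilon r_{ij}(\varepsilon r_{ij}^2-1)}$), and one must verify entry by entry that the exponents telescope correctly: on the left $-\varepsilon r_{ij}$; on the right, combining $-\varepsilon\|(\boldsymbol{u}_i,\Psi(\boldsymbol{u}_i))-(\boldsymbol{u}_j,\Psi(\boldsymbol{u}_j))\|_2$ from $\mK^\Psi$ interpreted via its own Gaussian-type split, the $\mK_G^\varphi$ contribution $-\varepsilon\|\Psi(\boldsymbol{u}_i)-\Psi(\boldsymbol{u}_j)\|_2^2$, and the two Hadamard-power corrections, everything must cancel down to $-\varepsilon r_{ij}$. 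This is where the hypothesis on positive definiteness of $\mA^{\circ-1}$ is exactly what is needed for Theorem~\ref{th_eig} to apply; the rest is just the Schur inequality and the definition of the $2$-condition number for symmetric positive definite matrices.
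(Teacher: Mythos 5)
Your overall strategy is the paper's: factor the Gaussian VSK matrix two ways, isolate a Hadamard factor with unit diagonal, and apply the Schur eigenvalue bound (Theorem \ref{th_eig}) together with $\mathrm{cond}=\lambda_{\max}/\lambda_{\min}$ for symmetric positive definite matrices. However, there is a genuine error in the step you yourself flag as the crux: the decomposition comes out backwards. Writing $\mathsf{A}:=\mK^{\circ(\varepsilon\mD-1)}\circ\mK_G^{\varphi}\circ\bigl((\mK^{\Psi})^{\circ(\varepsilon\mD^{\Psi}-1)}\bigr)^{\circ-1}$ (the matrix assumed positive definite in the statement), the identity obtained from $\mK_G^{\Psi}=\mK_G\circ\mK_G^{\varphi}$ together with $\mK_G=\mK\circ\mK^{\circ(\varepsilon\mD-1)}$ and $\mK_G^{\Psi}=\mK^{\Psi}\circ(\mK^{\Psi})^{\circ(\varepsilon\mD^{\Psi}-1)}$ is
\begin{equation*}
\mK^{\Psi}\circ(\mK^{\Psi})^{\circ(\varepsilon\mD^{\Psi}-1)}=\mK\circ\mK^{\circ(\varepsilon\mD-1)}\circ\mK_G^{\varphi},
\end{equation*}
and dividing entrywise by $(\mK^{\Psi})^{\circ(\varepsilon\mD^{\Psi}-1)}$ gives $\mK^{\Psi}=\mK\circ\mathsf{A}$, i.e.\ $\mK=\mK^{\Psi}\circ\mathsf{A}^{\circ-1}$ --- the opposite of your claim $\mK=\mK^{\Psi}\circ\mathsf{A}$.

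This flip is not cosmetic. With your version of the identity, Theorem \ref{th_eig} (applied with $\mE=\mK^{\Psi}$, $\mM=\mathsf{A}$) yields only the reversed inequality $\mathrm{cond}(\mK)\leq\mathrm{cond}(\mK^{\Psi})$, and your proposed repair --- ``read the chain in the other direction using $\mM=\mathsf{A}^{\circ-1}$'' --- requires the \emph{entrywise reciprocal} $\mathsf{A}^{\circ-1}$ to be positive definite. That is not the stated hypothesis, and positive definiteness of a matrix with positive entries does not pass to its Hadamard inverse, so the patch is unjustified. With the correct identity $\mK^{\Psi}=\mK\circ\mathsf{A}$ no patch is needed: apply Theorem \ref{th_eig} once with $\mE=\mK$ (positive definite since the Mat\'ern kernel is strictly positive definite and the nodes are distinct) and $\mM=\mathsf{A}$ (positive definite by hypothesis, with $\mathsf{A}_{ii}=1$, as you correctly verified), obtaining $\lambda_{\min}(\mK)\leq\lambda_i(\mK^{\Psi})\leq\lambda_{\max}(\mK)$ for all $i$ and hence $\mathrm{cond}(\mK^{\Psi})\leq\mathrm{cond}(\mK)$ directly.
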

\begin{proof}
At first we need to point out the link between the Gaussian and Mat\'ern kernel matrices. With the notation previously introduced, denoted by $\mK$ the Mat\'ern $C^0$ kernel matrix, we have that
\begin{equation}\label{mat1}
\mK_G = \mK \circ (\mK^{\circ (\varepsilon \mD-1)}), 
\end{equation}
and analogously, 
\begin{equation}\label{mat2}
\mK_G^{\Psi}= \mK^{\Psi} \circ (\mK^{\Psi})^{\circ (\varepsilon \mD^{\Psi}-1)}.
\end{equation}
Then, from \eqref{gauss} and \eqref{mat1} we obtain
$$
\mK_G^{\Psi}= \mK_G \circ \mK_G^{\varphi} = \mK \circ (\mK^{\circ (\mD-1)}) \circ \mK_G^{\varphi} ,
$$
which, thanks to \eqref{mat2}, implies that
$$
\mK^{\Psi} \circ (\mK^\Psi)^{\circ (\varepsilon \mD^{\Psi}-1)} = \mK \circ (\mK^{\circ  (\varepsilon \mD-1)}) \circ \mK_G^{\varphi},
$$
and thus,
$$
\mK^{\Psi}  = \mK \circ \left(\mK^{\circ (\varepsilon \mD-1)} \circ \mK_G^{\varphi} \circ \left( (\mK^{\Psi})^{\circ (\varepsilon \mD^{\Psi}-1)} \right)^{\circ -1}\right).
$$
Furthermore, since the Mat\'ern kernel is strictly positive definite the associated kernel matrices are positive definite and thus the condition number can be computed as
$$
{\rm cond}(\mK^{\Psi})  = \dfrac{{\lambda}_{\max}(\mK^{\Psi})}{{\lambda}_{\min}(\mK^{\Psi})},
$$
and since  
$$\left(\mK^{\circ (\varepsilon \mD-1)} \circ \mK_G^{\varphi} \circ \left( {(\mK^{\Psi})}^{\circ (\varepsilon \mD^{\Psi}-1)} \right)^{\circ -1}\right)_{ii}=1, \quad i = 1,\ldots,n,$$
from Theorem \ref{th_eig}, we obtain
$$
{\rm cond}(\mK^{\Psi}) = \dfrac{{\lambda}_{\max}(\mK^{\Psi})}{{\lambda}_{\min}(\mK^{\Psi})} \leq \dfrac{{\lambda}_{\max}(\mK)}{{\lambda}_{\min}(\mK^{\Psi})}  \leq \dfrac{{\lambda}_{\max}(\mK)}{{\lambda}_{\min}(\mK)}= {\rm cond} (\mK).
$$
\end{proof}

These results, together with the error bounds shown in \cite{vskmpi}, give a theoretical validation for the use of VSKs. Numerical tests, carried out in the next section, show that the proposed method can be effectively used for many inverse problems, provided that the scaling function is appropriately selected for the considered application. As an example, in the following we test the method in the framework astronomical imaging.

\section{Applications to astronomical imaging}
\label{Application}

Fourier-based imaging finds its main applications in medical and astronomical imaging \cite{lustig2007sparse,Duijndam,richard2017interferometry}. For the latter topic, the image reconstruction problem can be designed as follows.

Let $\boldsymbol{u} \in U$, $U \subseteq \mathbb{R}^2$, be a point in the spatial frequency plane, and $I(\boldsymbol{x})$ the source function corresponding to the point $\boldsymbol{x}=(x, y)$ belonging to the physical plane $X \subseteq \mathbb{R}^2$. Given the following relation, 
\begin{equation}\label{eq:001}
    V (\boldsymbol{u}) = \int_{\mathbb{R}^2} I(\boldsymbol{x}) {\rm e}^{2 \pi i \boldsymbol{u} \cdot  \boldsymbol{x}} {\rm d} \boldsymbol{x},
\end{equation}
and some scattered observations on a compact set $D \subseteq U$ of the function $V$, the image reconstruction problem consists in finding an approximation of $I$. 

In the present study we focused on solar hard X-ray imaging and, specifically, we cast our interpolation-based reconstruction scheme into the framework of the NASA {\em{RHESSI}} and the ESA {\em{STIX}} missions. {\em{RHESSI}} had its nominal phase between February 2002 and August 2018 and many inversion methods have been formulated to express its observations as images \cite{Aschwanden,Benvenuto,Bonettini,Cornwell,Massa_2020,Massone2009HARDXI,felix2017compressed}. Instead, {\em{STIX}} will begin its nominal phase in September 2021 and the few studies devoted to its imaging process involve just synthetic data \cite{Massa_2020}. 

{\em{RHESSI}} and {\em{STIX}} share the same imaging concept \cite{2002SoPh,STIX1}, in which the measured counts are arranged into a set of $n$ samples of the Fourier transform of the incoming photon flux, named visibilities, each one associated to a specific point $(u,v)$ of the Fourier $(u,v)$-plane. In the case of {\em{RHESSI}}, nine collimators provided these visibilities on $9$ circles of the $(u,v)$-plane with increasing radii from about $2.73 \times 10^{-3}$  arcsec$^{-1}$ to $2.21\times 10^{-1}$  arcsec$^{-1}$, and $n$ depending on the count statistics. We point out that in the application considered in Section 5 below, we utilized detectors from 3 through 9. The radius of detector 3 is 
$7.36\times 10^{-2}$ arcsec$^{-1}$. In the case of {\em{STIX}}, $30$ subcollimators relying on the Moire pattern technology, will provide $60$ visibilities on 10 circles of the $(u,v)$-plane with increasing radii from about $2.79\times 10^{-3}$ arcsec$^{-1}$ to $7.02\times 10^{-2}$ arcsec$^{-1}$. 

In order to apply our interpolation scheme to these data, we first need to compute the scaling function $\Psi$ and the optimal shape parameter for the two instruments.

\subsection{VSKs for RHESSI and STIX}

In order to define the scaling function $\Psi$ we will make use of a first approximation of the inverse problem obtained via a standard back-projection algorithm \cite{Mersereau} that computes the discretized inverse Fourier transform of the visibilities by means of the IDL source code  {\texttt {vis\char`_bpmap}}  available in the NASA {\em{Solar SoftWare (SSW)}} tree.  Given the visibilities, the latter algorithm returns an $M \times M$ grid, namely ${\bar I }_{\epsilon}(\bar{\boldsymbol{x}}_i)$, $i=1,\ldots, M^2$. For stability purposes such an image is first segmented as in \eqref{tres}, where we numerically observed that an appropriate choice for the threshold is $0.70 \leq p \leq 0.90$. Then, by solving the forward problem \eqref{eq:001}, we obtain $ \bar{V}(\bar{\boldsymbol{u}}_i)$, $\bar{\boldsymbol{u}}_i \in U$, $i= 1, \ldots, M^2$.

To fully understand our strategy for defining the function $\Psi$ for {\em{RHESSI}} and {\em{STIX}}, we need to point out that the interpolation/extrapolation procedure provides an image of size $M \times M$, with $M=128$. Precisely, after evaluating the VSK interpolants on a grid of $N \times N$ data, we implement a zero-padding strategy that provides a grid of $T \times T$ pixels, with $T \gg N$. Finally, after the inversion we subsample the $T \times T$  grid for obtaining an $M \times M$ image. Therefore, given $ \bar{V}({\bar u}_i,\bar{v}_j)$, $i,j= 1, \ldots, M$, for maintaining the proportion between the grids we take $ \bar{V}({\bar u}_i,\bar{v}_j)$, $i,j= M/2-\lfloor L/2 \rfloor -1, \ldots, M/2+\lfloor L/2 \rfloor +1$, where $L = \lfloor T/(MN) \rfloor$. Then, those values are interpolated and evaluated at the sets ${\cal U}$ and $\tilde{{\cal U}}$. This step generates 
\begin{equation}
\label{int}
\Psi(\boldsymbol{u}_i):=(P_{{\rm Re}({\bar{V}})} (\boldsymbol{u}_i),P_{{\rm Im}(\bar{V})} (\boldsymbol{u}_i)), \quad i=1,\ldots,n,
\end{equation}
and 
\begin{equation} \label{eval}
\Psi(\tilde{\boldsymbol{u}}_i):=(P_{{\rm Re}(\bar{V})} (\tilde{\boldsymbol{u}}_i),P_{{\rm Im}(\bar{V})} (\tilde{\boldsymbol{u}}_i)), \quad i=1,\ldots,N^2.    
\end{equation}
Therefore, we are able to encode the back-projection map into the kernel and implement the VSK setting. Indeed, thanks to \eqref{int} and \eqref{eval}, we can construct the VSK kernel matrix $\mK_{\Psi}$ and evaluate the VSK interpolant as in \eqref{ev_RBF}.

\begin{remark}
For STIX visibilities, as well as for RHESSI  visibilities computed via detectors 3-9, we fix $N=320$, producing visibility grids of mesh size $5 \times 10^{-4}$ arcsec$^{-1}$. The value of $T$ is then set as $1920$. We then subsample the $T \times T$ image by covering it with $M^2$ masks of size $15\times 15$ pixels and we select the first pixel for each mask. This step leads to approximated images of pixel size of about $1$ arcsec. 
\end{remark}

We conclude this section with a comment on the selection of the shape parameter for the considered imaging problem. 
 
\subsection{Optimal shape parameter for RHESSI and STIX}

In this subsection, we propose a criterion to select an optimal shape parameter for classical kernel-based interpolation. In order to make a fair comparison between classical and VSK interpolation, the same shape parameter will be used for the VSK setting too. 

Note that Theorem \ref{powfuntheor} bounds the pointwise error in terms of the power
function which depends on the kernel and on the data points but is independent of the function values. This suggests a criterion to select a reliable shape parameter, i.e. the shape parameter that minimizes the power function computed over the data. While for {\em{RHESSI}} we should compute the power function for each data configuration (the data locations may vary during the acquisition process), for {\em{STIX}} we can provide an a priori optimal shape parameter. To this aim, we take $100$ values of the shape parameter in $[0.01,1]$ and we evaluate for each of them the corresponding maximum value of the power function. With the considered Mat\'ern $C^0$ kernel, the result is the one plotted in Figure \ref{fig1_new}. We note that, having a few data by {\em{STIX}} makes the problem quite stable; indeed the error curve grows monotonically with respect to the shape parameter. Therefore, we fix $\varepsilon=0.01$. 

\begin{figure}
		\centering 
		 \includegraphics[scale= 0.28]{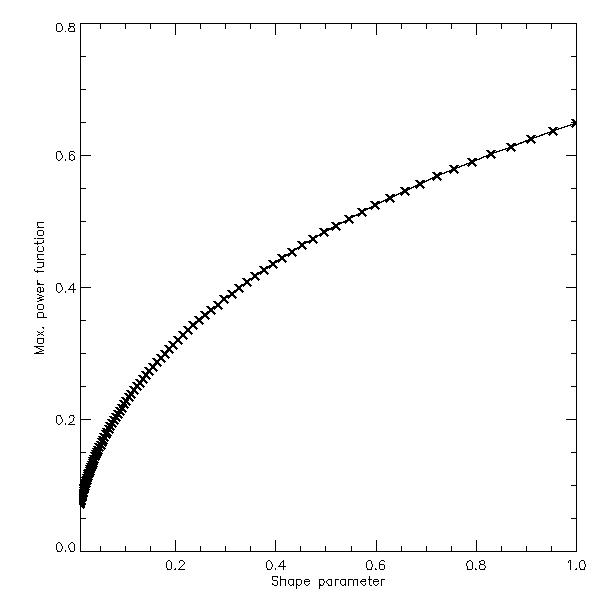} 
		\caption{The shape parameter VS the maximum value of the power function computed for the {\em{STIX}} visibilities.} 
		\label{fig1_new} 
\end{figure}

Despite the fact that the optimal shape parameter is investigated only for {\em{STIX}}, numerical evidence shows that such value is reliable for {\em{RHESSI}} too. 

\section{Numerical experiments}
\label{experiments}
We first consider a motivating example devoted to stress the dependence of the proposed algorithm on the interpolation routine and more specifically on the locations of the sampling nodes. We then consider an example involving {\em{STIX}} synthetic visibilities simulated from a realistic flaring source model and an application on experimental {\em{RHESSI}} data. 

For all the experiments, we test and compare two reconstruction algorithms both based on the Landweber scheme with two different interpolation procedures: the first, namely Land-RBF, uses classical RBFs and the second, Land-VSK, implements the VSK strategy.

\subsection{Motivating example}
Let us consider the two dimensional Gaussian function
\begin{equation}\label{gaussian}
I({\boldsymbol{x}}) = A {\rm e}^{-B \|{\boldsymbol{x}}-{\boldsymbol{x}}_p\|}~,
\end{equation}
with ${\boldsymbol{x}}_p$, $B$ and $A$ chosen in such a way to mimic an X-ray emitting source from position ${\boldsymbol{x}}_p$ on the Sun, Full Width at Half Maximum (FWHM) equal to 11 arcsec and photon flux equal to $10^4$ photons cm$^{-2}$ s$^{-1}$. Using the Monte Carlo code at disposal in the {\em{STIX}} simulation software we produce three sets of visibilities according to the $(u,v)$ samplings represented in Figure \ref{fig0_b}. The left panel of this figure corresponds to $100$ Fibonacci nodes, i.e. well-spaced nodes in the $(u,v)$-plane. We remark that the problem of finding optimal data locations for kernel-based interpolation problems is a well-known issue studied by many researchers (see, e.g., \cite{Wendland05}). Without giving details, the Fibonacci nodes have the property of being well-distributed, and hence any RBF-based interpolation routine should work properly on them. The middle panel corresponds to the {\em{RHESSI}} sample and, in this experiment, we assumed that $n=240$ visibilities are provided by {\em{RHESSI}}. Finally, the right panel corresponds to the sampling of $n=60$ visibilities performed by {\em{STIX}}. Correspondingly to these three sampling configurations, back-projection provides the three images represented in Figure \ref{Fig_fib_rhessi_stix_BPMAPS}. Such maps are used as starting point for constructing the VSK scaling functions. Then the outcomes for VSK and classical interpolations are plotted in Figure \ref{Fig_fib_rhessi_stix_vsk} and Figure \ref{Fig_fib_rhessi_stix}, respectively, where we also plotted the interpolated modulus of the visibility surfaces with the different methods. 

\begin{figure}
\centering
    \includegraphics[scale=0.20]{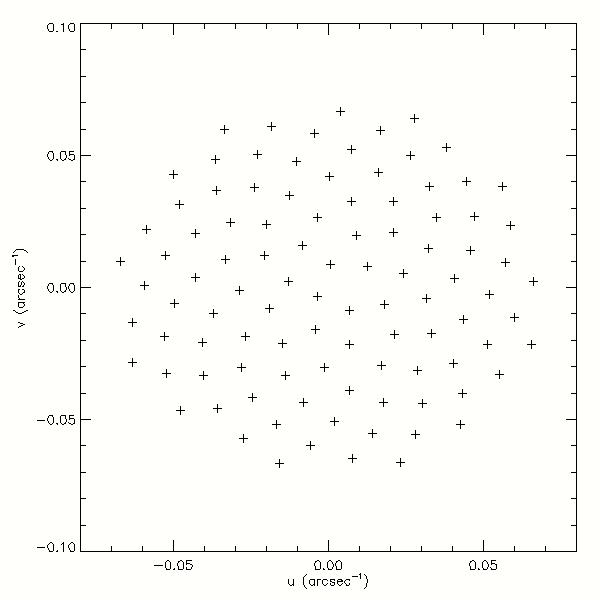} 
    \includegraphics[scale=0.20]{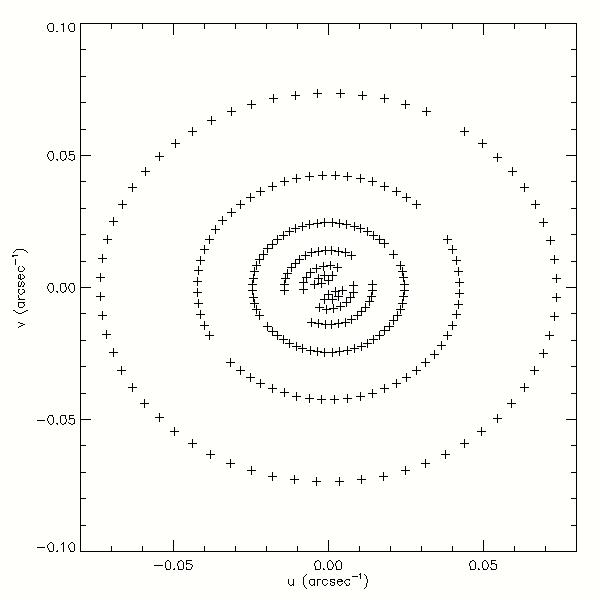}
    \includegraphics[scale=0.20]{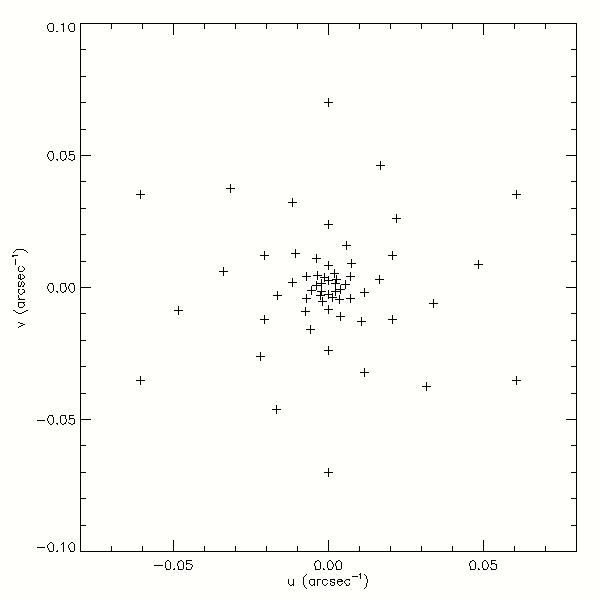} 
    \caption{Illustrative example of Fibonacci (left), {\em{RHESSI}} (middle) and {\em{STIX}} (right) visibilities.}
    \label{fig0_b}
\end{figure}

We observe that as long as the data are well-spaced (as in the case of Fibonacci nodes), both reconstructions return comparable results; however, note that the highest peak intensity is not correctly detected by Land-RBF. The differences between the two schemes become more and more evident when interpolating {\em{RHESSI}} and {\em{STIX}} visibilities.

\begin{figure}
    \centering
  \includegraphics[scale=0.14]{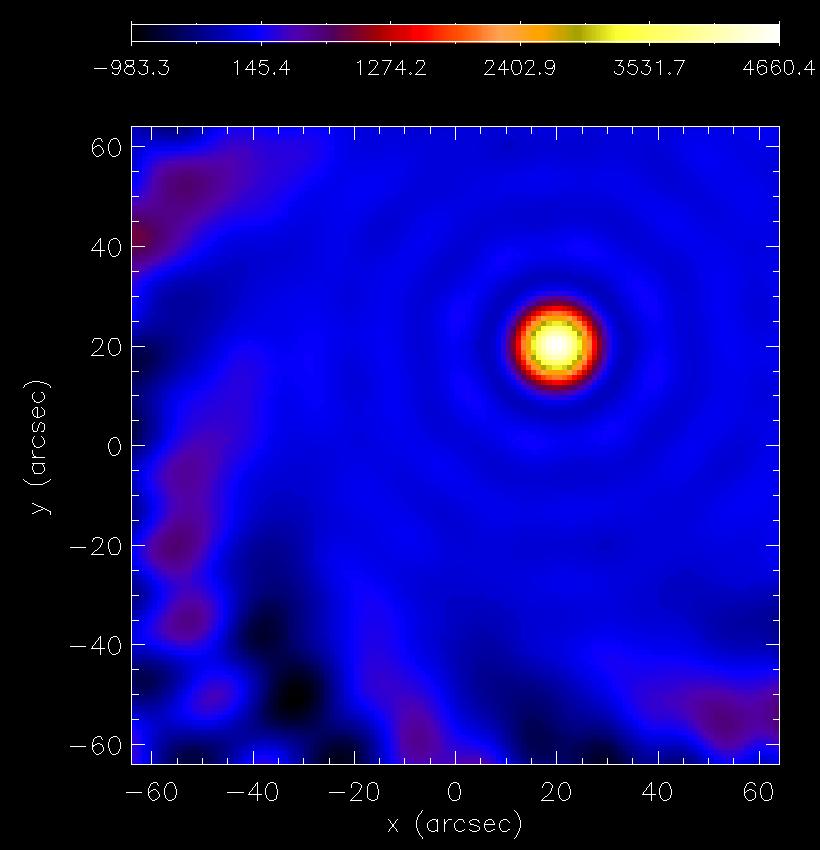}    
  \includegraphics[scale=0.14]{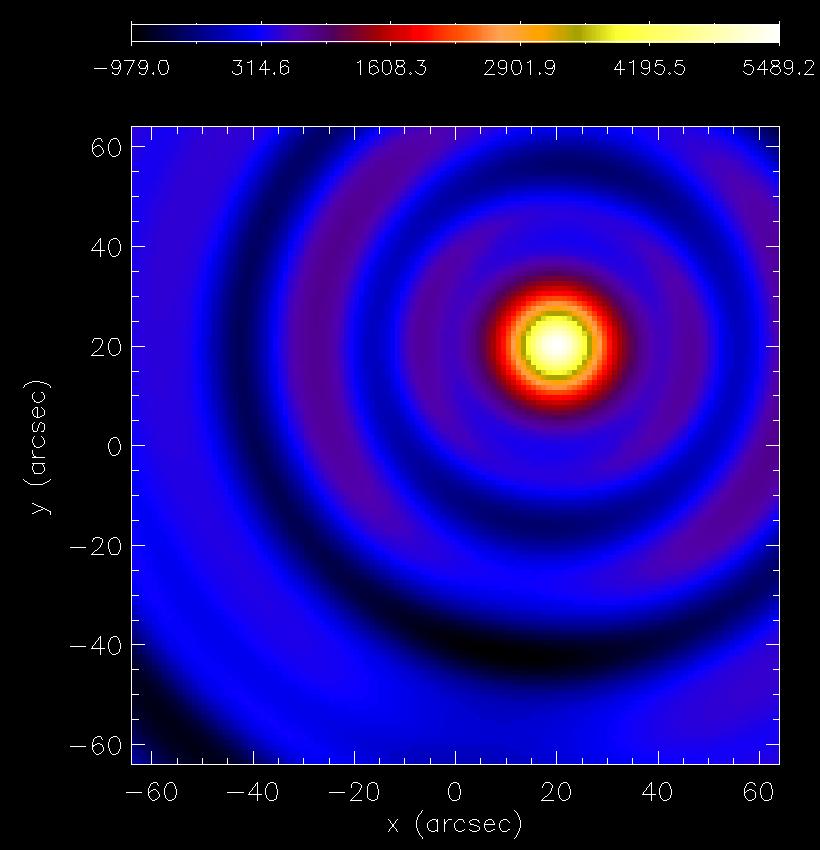} 
 \includegraphics[scale=0.14]{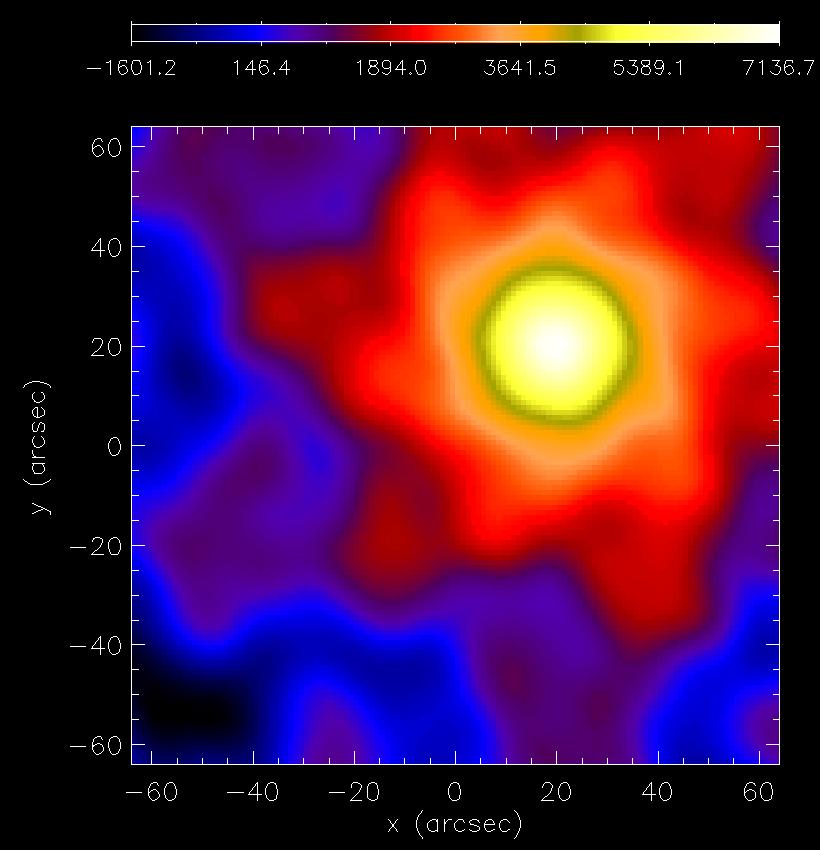}  
\caption{Reconstruction of the Gaussian source function (\ref{gaussian}). From left to right: back-projection map computed via Fibonacci, {\em{RHESSI}} and {\em{STIX}} data locations, respectively. These back-projection maps have been used to generate the scaling functions exploited by Land-VSK.}
\label{Fig_fib_rhessi_stix_BPMAPS} 
\end{figure}

\begin{figure}
    \centering
  \includegraphics[scale=0.13]{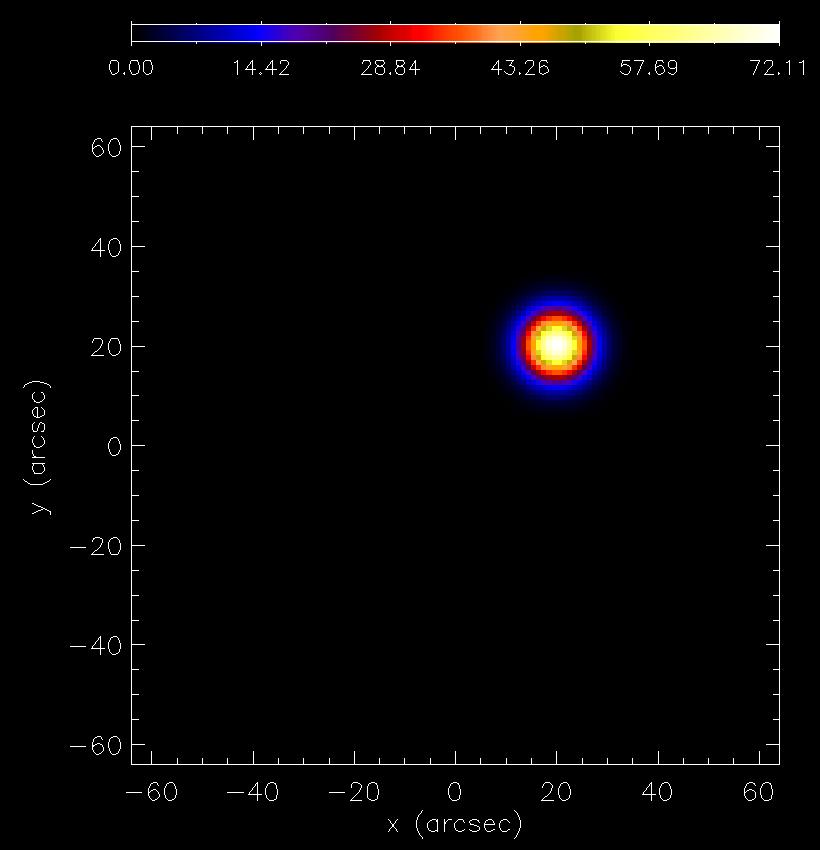}        
    \includegraphics[scale=0.13]{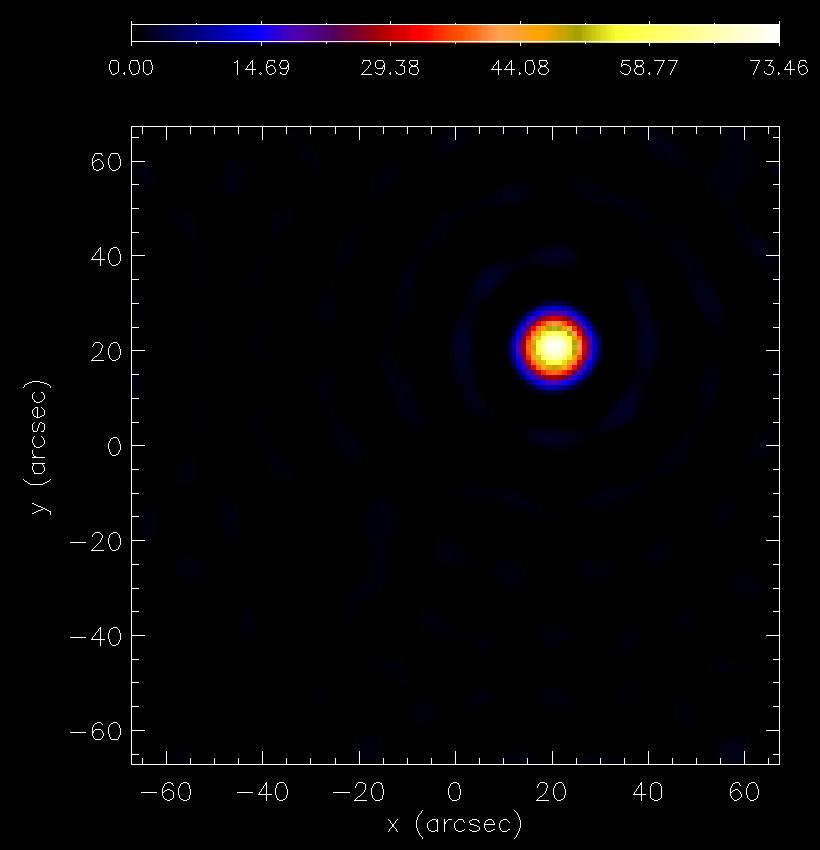} 
    \includegraphics[scale=0.13]{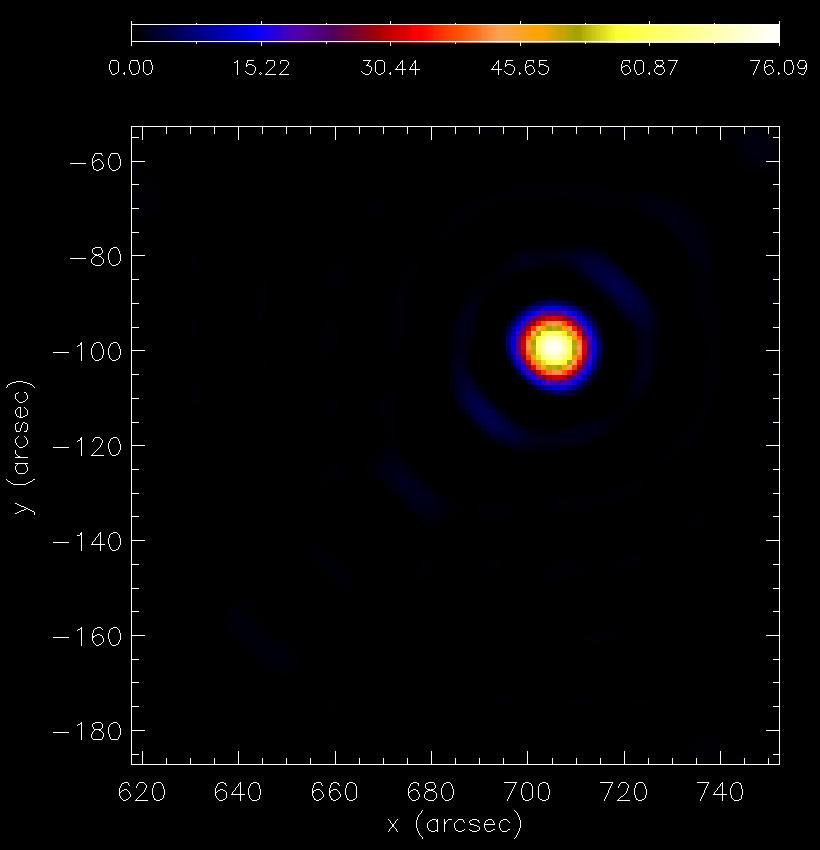}    
   \includegraphics[scale=0.13]{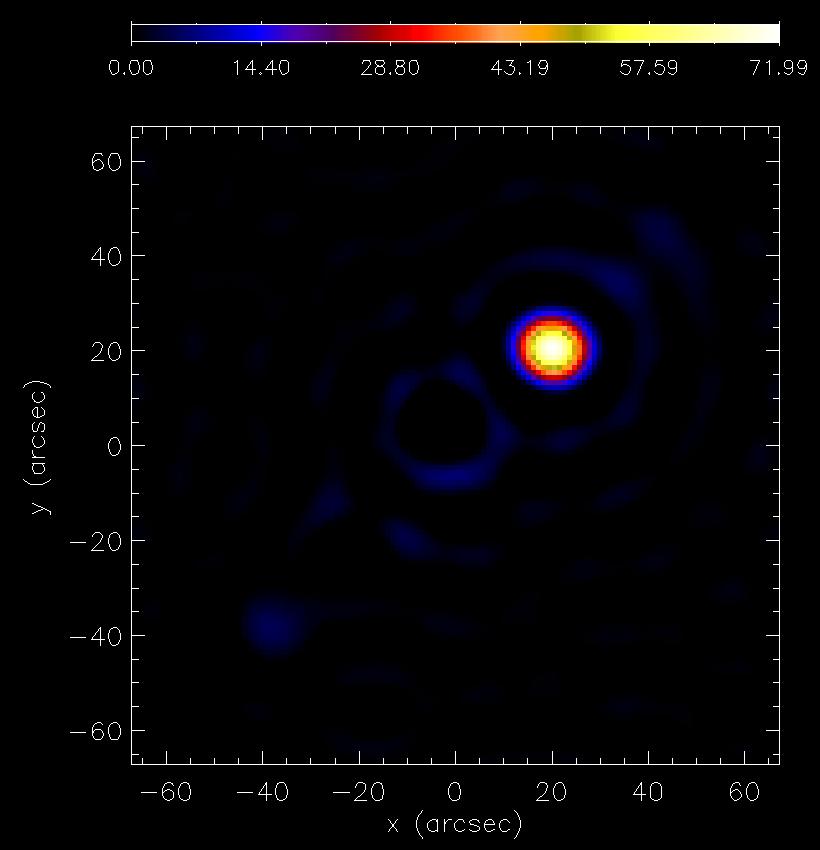} \\
    \includegraphics[scale=0.13]{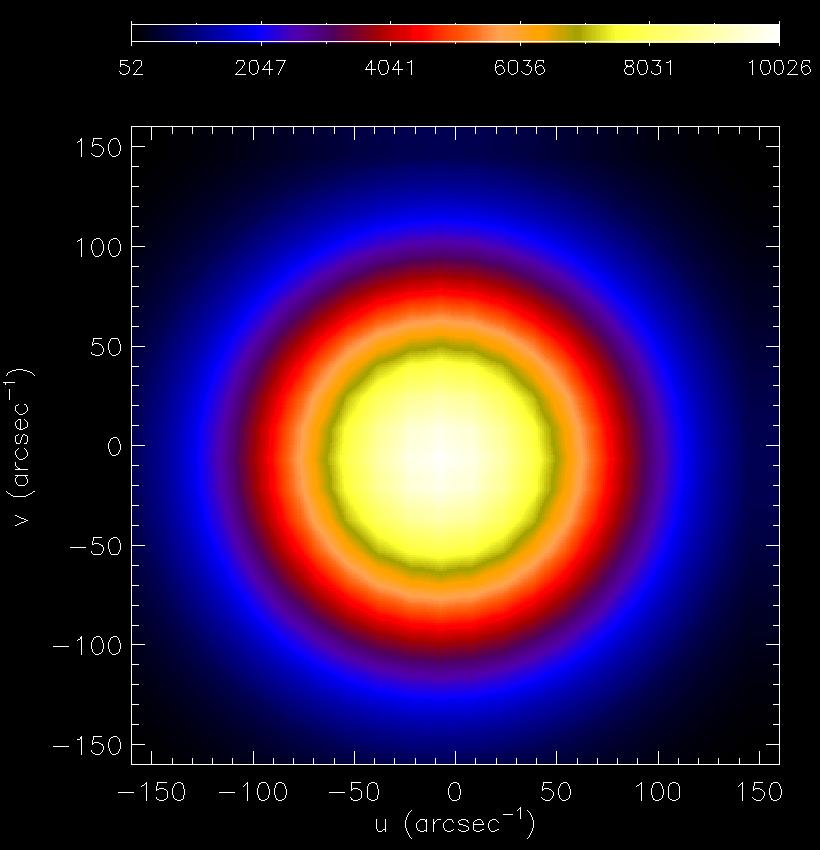} 
  \includegraphics[scale=0.13]{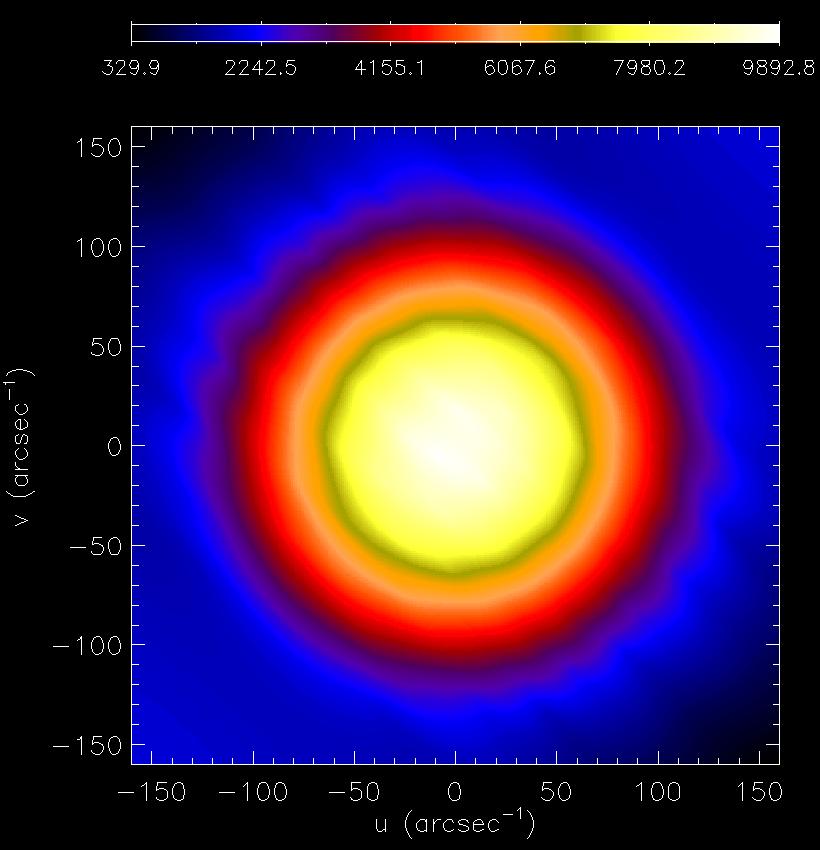}   
   \includegraphics[scale=0.13]{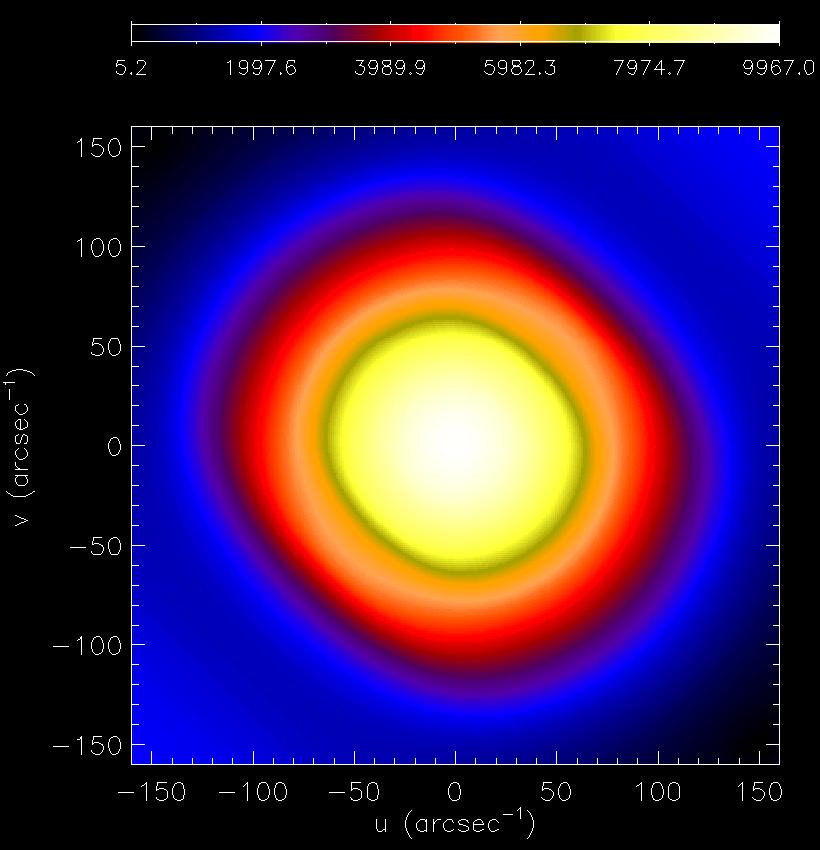}   
  \includegraphics[scale=0.13]{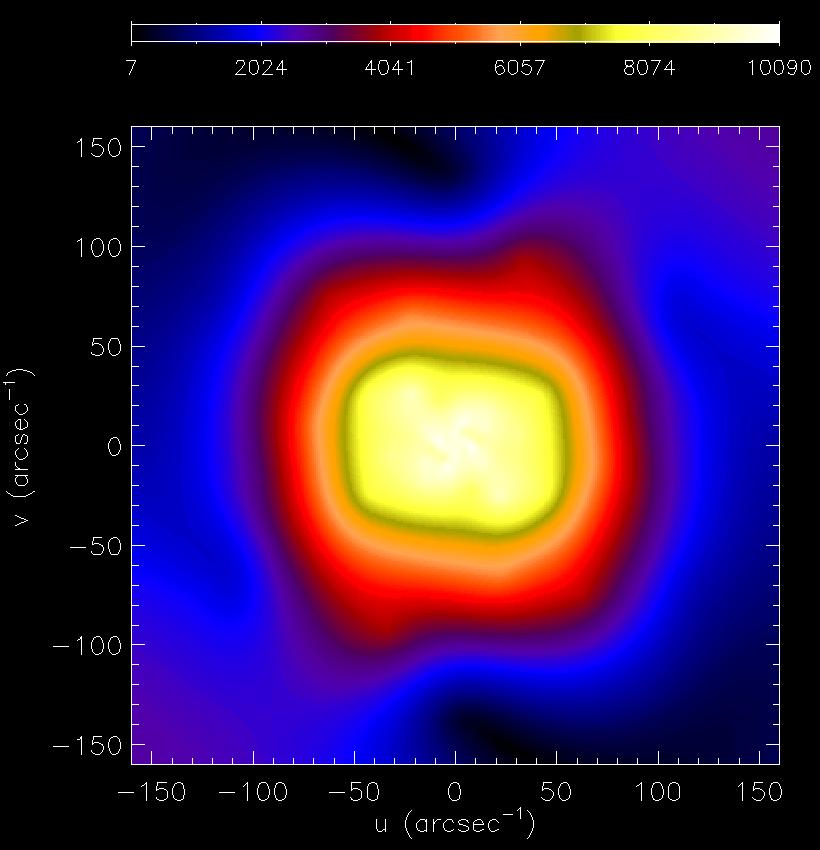} 
  \caption{Top row, from left to right: ground truth and reconstructions of the Gaussian source function via Land-VSK using Fibocacci, {\em{RHESSI}} and {\em{STIX}} data locations. The second row represents the visibility surfaces of the ground truth and the ones returned by the interpolation algorithm.}
    \label{Fig_fib_rhessi_stix_vsk}
\end{figure}

\begin{figure}
    \centering
  \includegraphics[scale=0.13]{./FigPaper/2pointL_1_realmap}    
  \includegraphics[scale=0.13]{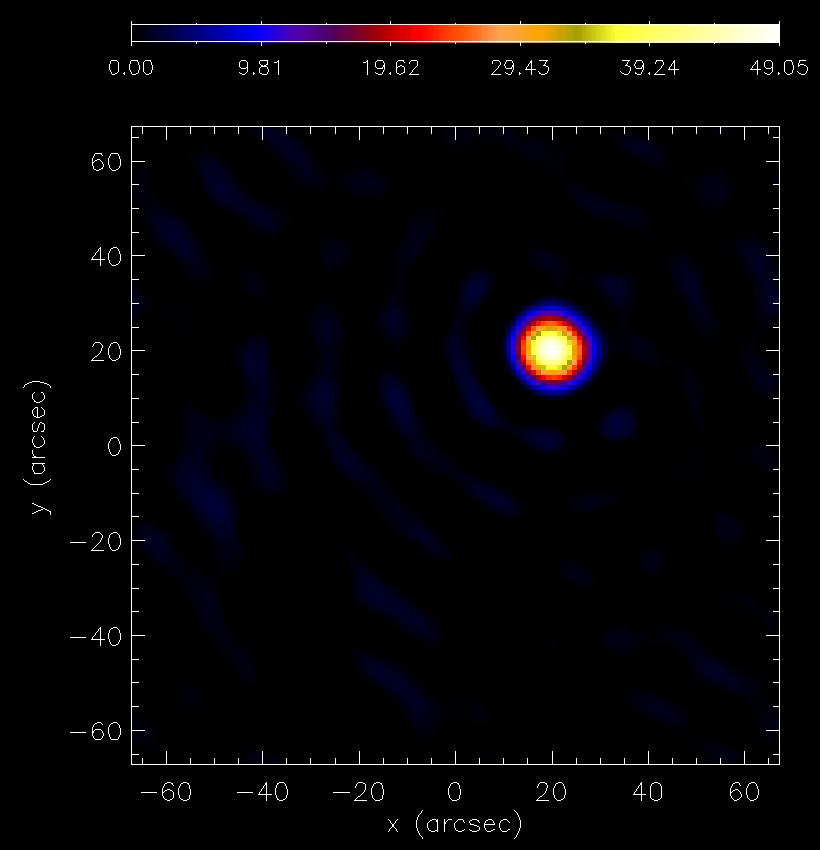} 
      \includegraphics[scale=0.13]{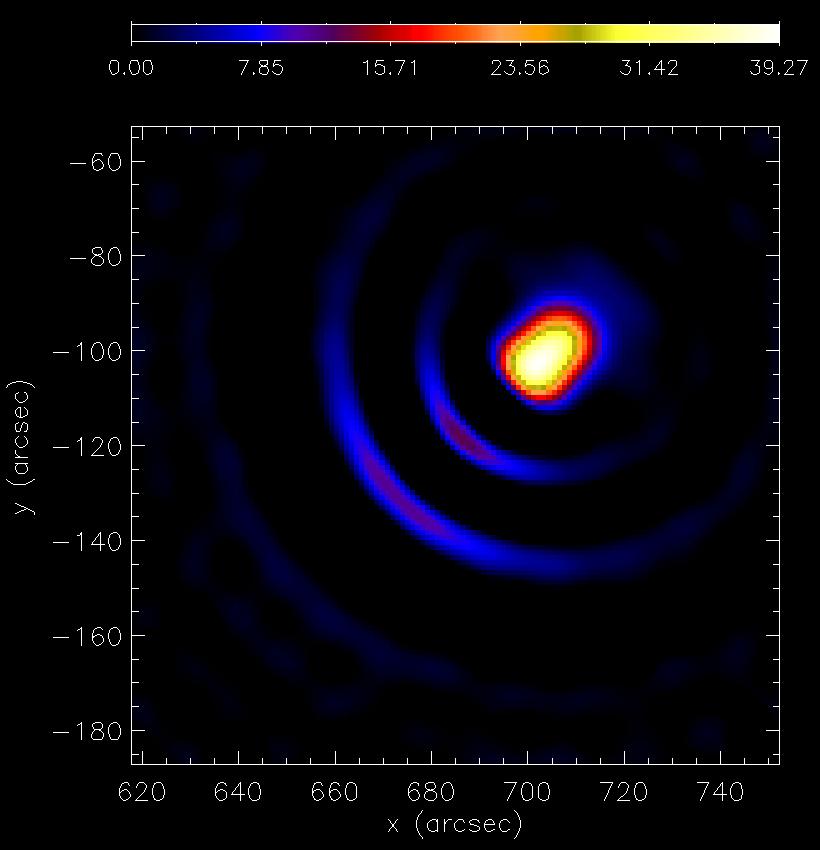} 
     \includegraphics[scale=0.13]{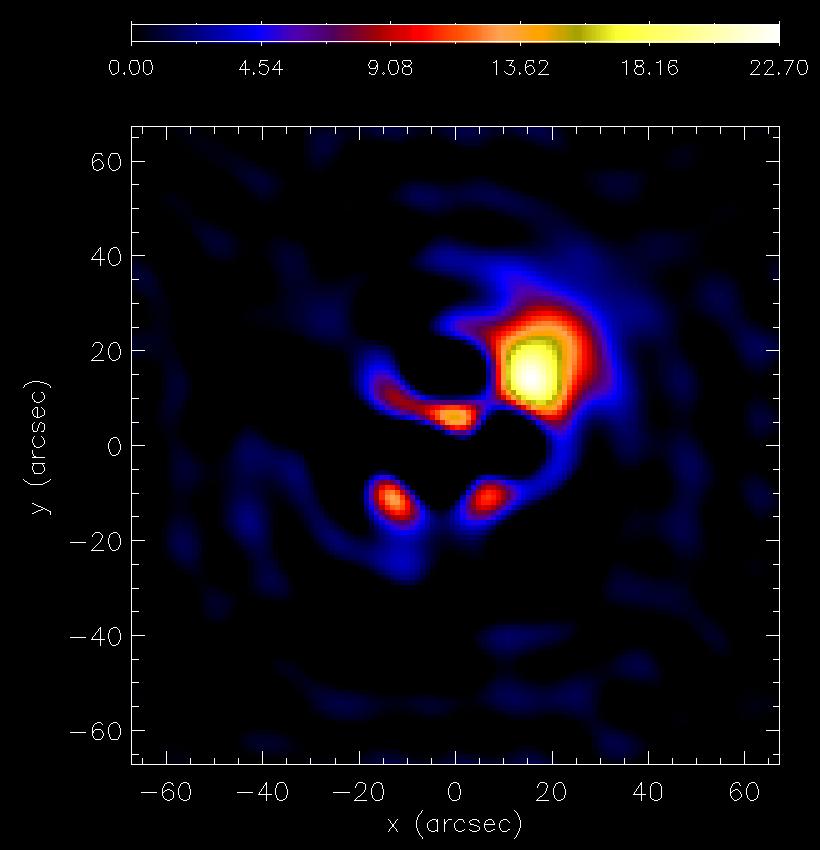}\\
  \includegraphics[scale=0.13]{./FigPaper/2pointL_1_realmap_vism}     
  \includegraphics[scale=0.13]{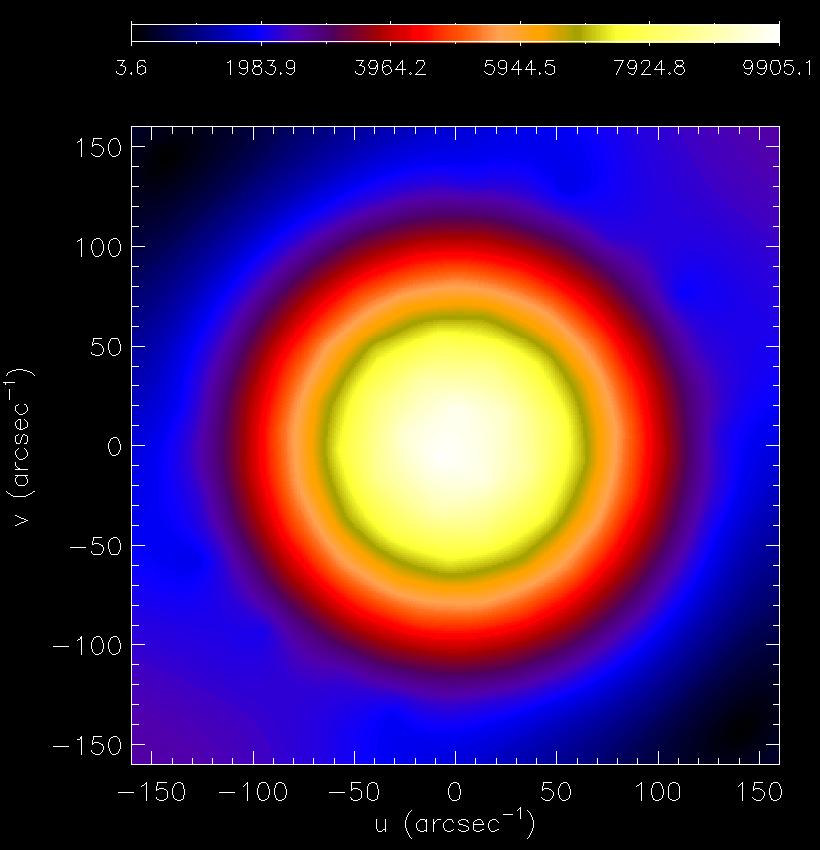}  
  \includegraphics[scale=0.13]{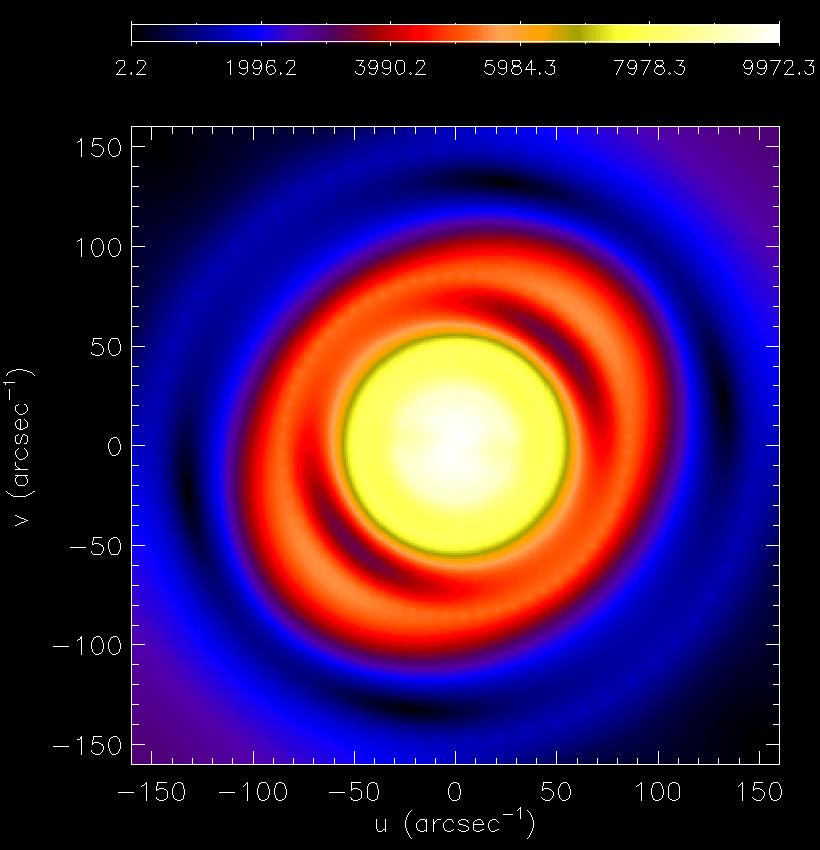}    
  \includegraphics[scale=0.13]{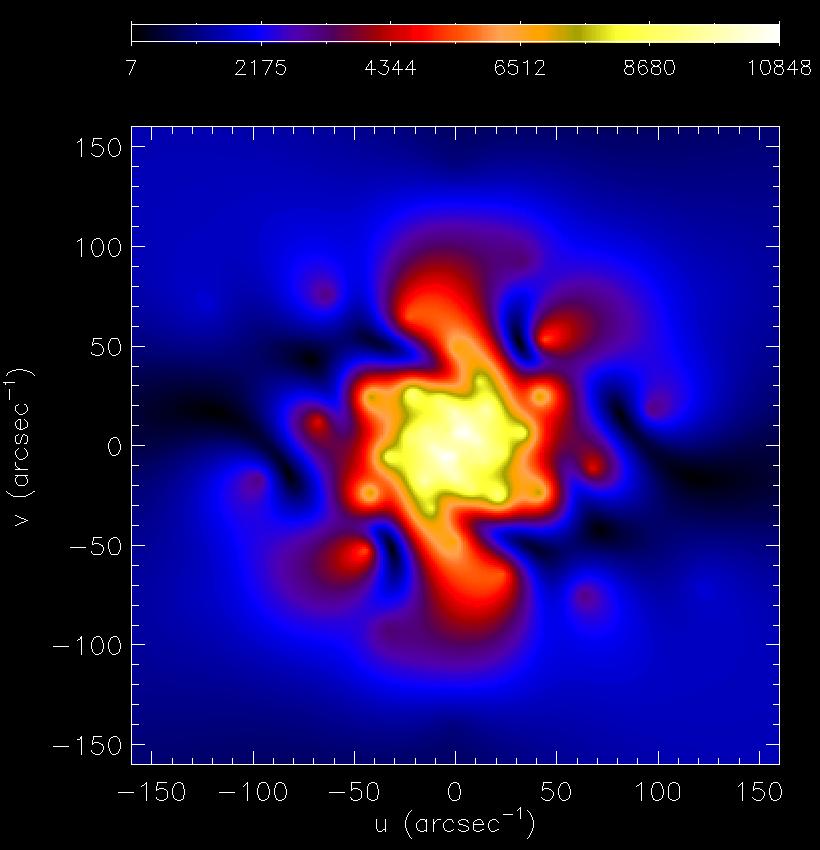}  
  \caption{Top row, from left to right: ground truth and reconstructions of the Gaussian source function via classical Land-RBF methods using Fibocacci, {\em{RHESSI}} and {\em{STIX}} data locations. The second row represents the visibility surfaces of the ground truth and the ones returned by the interpolation algorithm.}
    \label{Fig_fib_rhessi_stix}
\end{figure}

\subsection{STIX simulated observation of a double foot-point}

Flaring emission at hard X-ray energies is typically due to the bremsstrahlung interaction between electron energies accelerated along the two arms of a magnetic loop and the ambient plasma \cite{brown1971deduction}. As a consequence, a typical flare configuration is the one of Figure \ref{fig4_new}, top left panel, where the configuration parameters are described in Table \ref{tab1_new} (ground truth). We generated $25$ sets of synthetic visibilities by as many runs of the Monte Carlo code and applied Land-RBF and Land-VSK to these synthetic sets. Figure \ref{fig4_new} compares the reconstructions and the corresponding visibility surfaces for the ground truth and the two reconstruction algorithms, while the corresponding averaged parameter estimates are illustrated in Table \ref{tab1_new} together with their standard deviations. The table also shows the Relative Root Mean Square Error (RRMSE) defined as
\begin{equation*}%\label{RRMSE}
{\rm RRMSE} = \dfrac{||\mV-\mW ||_{\mF}}{||\mW||_{\mF}}~,
\end{equation*}
where $V$ and $W$ are the moduli of the visibility surfaces corresponding to the ground truth and the reconstruction, respectively, and $\| \cdot \|_{\mF}$ is the Frobenius norm. Further, to numerically verify Propositions \ref{prop1} and \ref{prop2}, we report in Table \ref{tabcond_new} the spectral ratios and condition numbers for the standard and VSK Mat\'ern kernels for one out of the $25$ simulations.

\begin{figure}
    \centering
    \vskip 0.1cm 
  \includegraphics[scale=0.15]{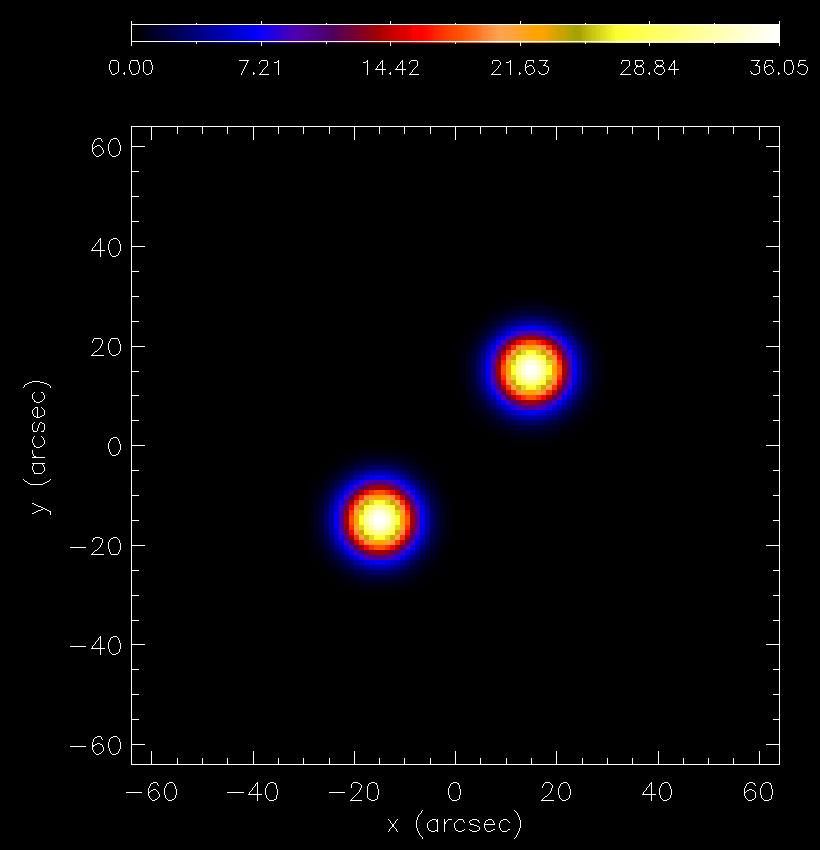}
  \includegraphics[scale=0.15]{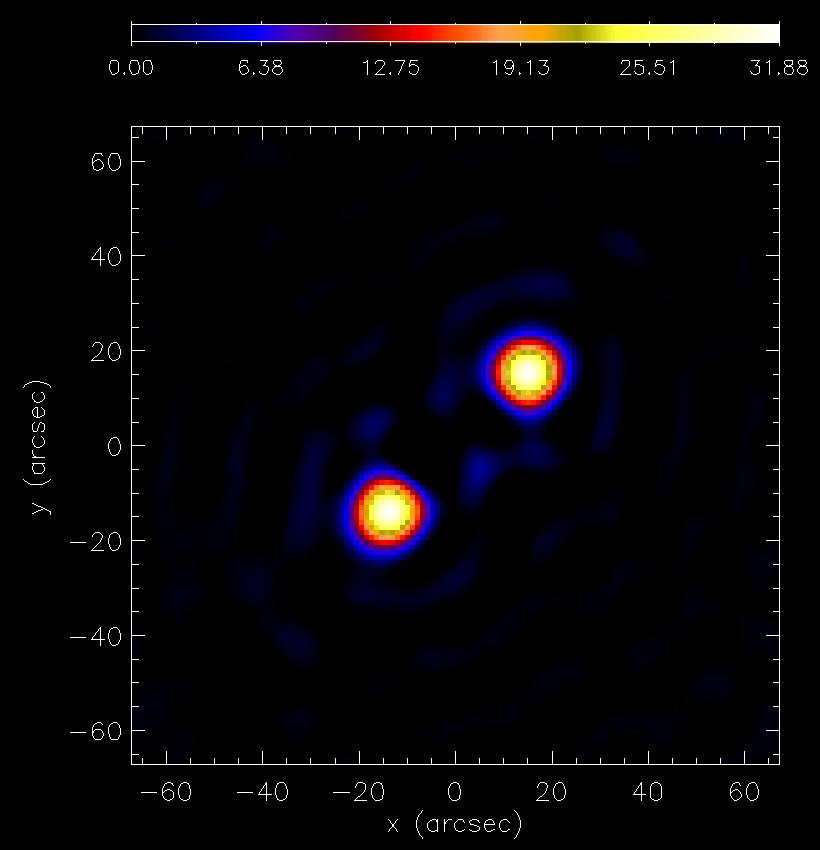}
  \includegraphics[scale=0.15]{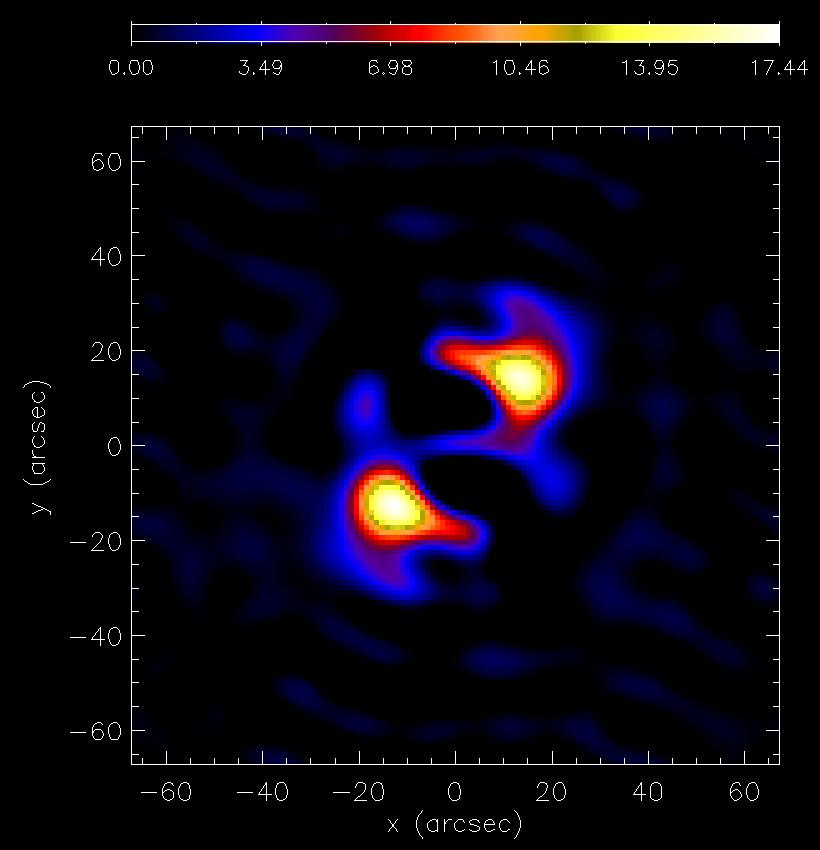}  \\
  \includegraphics[scale=0.15]{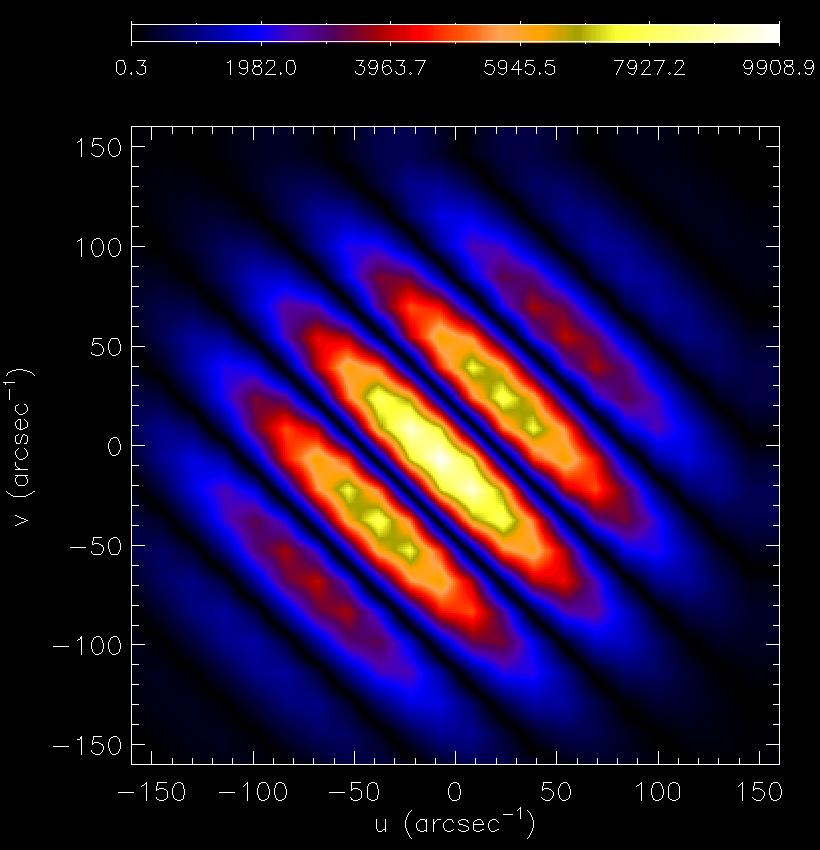}
  \includegraphics[scale=0.15]{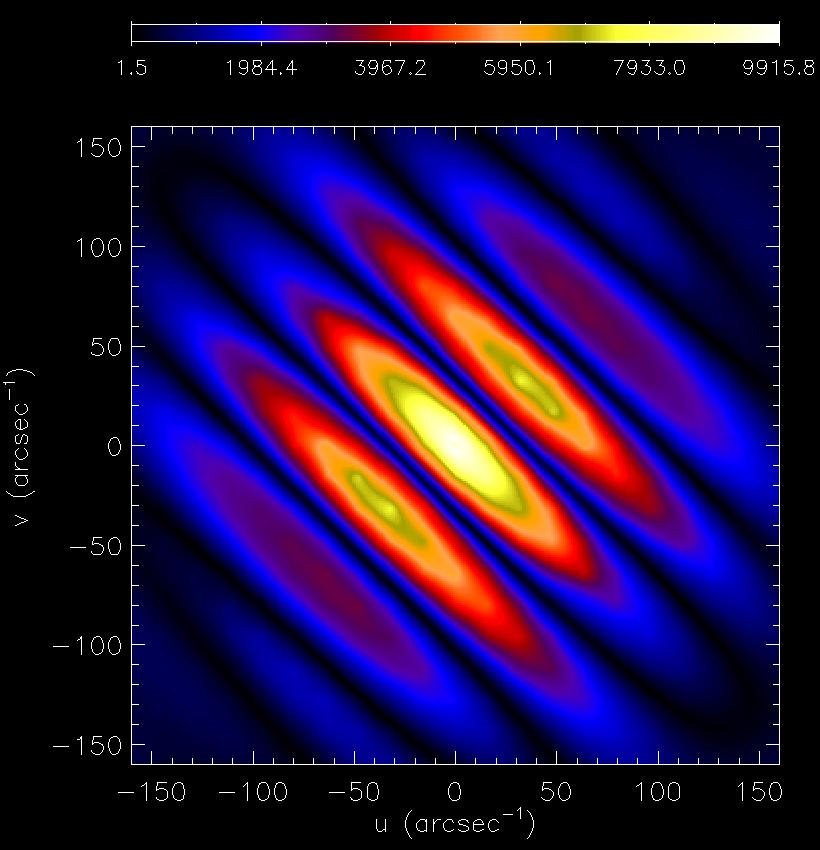}
  \includegraphics[scale=0.15]{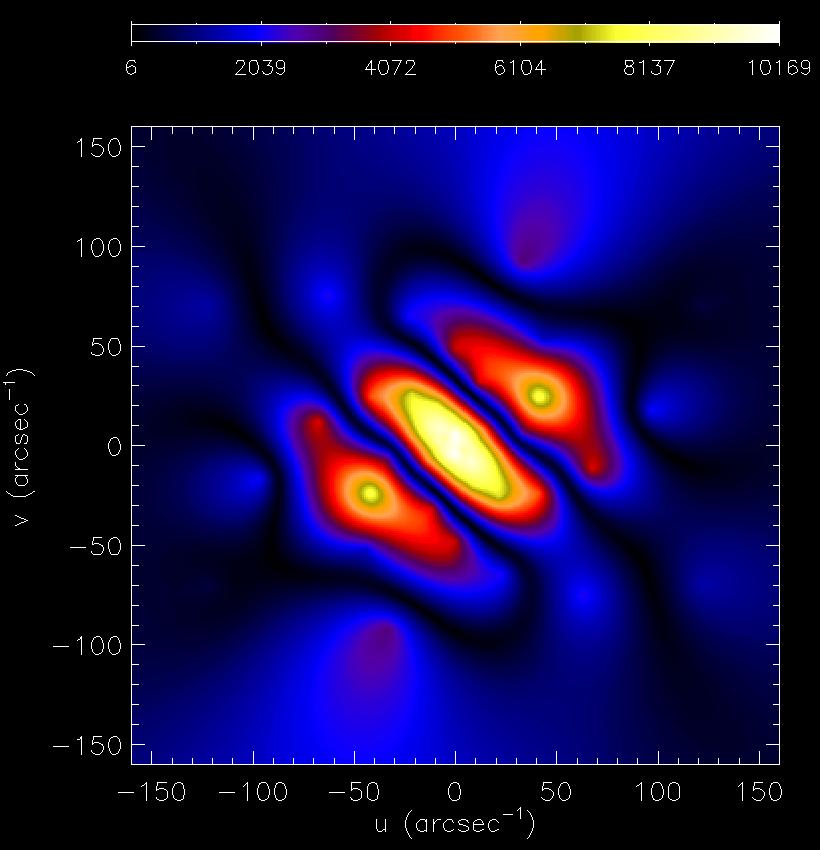}   
\caption{Top row, from left to right: ground truth and reconstructions of the synthetic double foot-point flare from {\em{STIX}} simulated visibilities, using Land-VSK and Land-RBF. The bottom row represents the visibility surfaces of the ground truth and the ones returned by the interpolation algorithms.}
		\label{fig4_new} 
\end{figure}

\begin{table}[ht]
\caption{Results concerning the reconstruction of the simulated double foot-point flare.}
\begin{center}
\begin{tabular}{lcccc}
\br
%	\smallskip
	 &$x_p$ (arcsec)&  $y_p$ (arcsec)&   FWHM (arcsec)  & FLUX (photon cm$^{-2}$ s$^{-1}$ $\times 10^3$)  \\
\mr
	& \multicolumn{4}{c}{First Peak} \\	 
\mr	 
	Simulated & -15.0 & -15.0 & 11.0 & 4.88 \\
	Land-VSK & -13.0 $\pm$ 0.2 & -13.1 $\pm$ 0.4 & 11.2 $\pm$ 0.2 & 3.91 $\pm$ 0.11 \\
    \smallskip
	Land-RBF & -11.8  $\pm$ 0.5 & -11.8  $\pm$ 0.4 & 13.9  $\pm$ 0.8 & 3.33  $\pm$ 0.16 \\
	\mr
	& \multicolumn{4}{c}{Second Peak}\\
       \mr
	%	\smallskip
	% & $x_p$ & $y_p$&   FWHM  & FLUX ($\times 10^3$) \\
	 
	Simulated & 15.0 & 15.0 & 11.0 & 4.88 \\
	Land-VSK & 14.9  $\pm$ 0.2 & 15.0  $\pm$ 0.4 & 11.3  $\pm$ 0.2 & 3.91  $\pm$ 0.07 \\
	\smallskip
	Land-RBF      & 13.9  $\pm$ 0.5 & 13.6  $\pm$ 0.5 & 13.9 $\pm$ 0.5 & 3.34 $\pm$ 0.09 \\
	\br
\end{tabular}
\begin{tabular}{lcc}
	&  Total Flux (photon cm$^{-2}$ s$^{-1}$ $\times 10^3$) & RRMSE \\
	\mr
	Simulated & 10.00 &   \\
	Land-VSK  & 10.18 $\pm$ 0.23 & 0.26  $\pm$ 0.01\\
		\smallskip
	Land-RBF & 11.35  $\pm$ 0.30 & 0.45  $\pm$ 0.02\\
	\br
\end{tabular}
\label{tab1_new}
\end{center}
\end{table}

\begin{table}[ht]
\caption{Condition number and spectral ratio of the classical and VSK kernel matrices computed for the STIX visibilities, using the data samples of Figure \ref{fig4_new}.}
\begin{center}
\begin{tabular}{cccc}
\br
 cond$(\mK)$ & cond$(\mK^{\Psi})$ &  $S(\mK)$ & $S(\mK^{\Psi})$  \\
\mr 
8.410 {\rm e}+05 & 3.346 {\rm e}+05 & 1.002 & 1.006 \\
\br
    \end{tabular}
    \label{tabcond_new}
\end{center}    
\end{table}

\subsection{RHESSI real data}

As last example, we test our procedure on a flare observed by {\em{RHESSI}} on February 20 2002 during the time interval  11:06:02--11:06:24 UT. The energy range of the event is 50--70 keV. The results returned by Land-RBF and Land-VSK are illustrated in Figure \ref{fig5}. In this application with real observations, we also include a comparison with two other algorithms included the {\em{SSW}} tree: uv$\_$smooth, which is an interpolation/extrapolation algorithm where the interpolation step is realized by using a spline function \cite{Massone2009HARDXI}; and Clean \cite{schmahl2007analysis}, which is a deconvolution algorithm based on thresholding procedures. The image of the {\em{RHESSI}} double foot-point flare reconstructed via Land-VSK contains fewer artifacts than the ones produced by Land-RBF and uv$\_$smooth. Artefacts are negligible in the case of Clean reconstructions. However, Clean has two significant drawbacks with respect to Land-VSK: first, it is not completely user-independent, given that the last step of this iterative scheme requires the convolution with an idealized point spread function whose FWHM is manually chosen by the user via heuristic considerations; second, the $\chi^2$ values associated to the interpolation scheme is significantly smaller ($\chi^2=2.1$ for Land-VSK; $\chi^2=3.3$ for Clean).

Also for {\em{RHESSI}} data, in Table \ref{tabcond1}, we report the spectral ratios and condition numbers for the standard and VSK Mat\'ern kernel matrices. 

\begin{figure}
\centering
    \includegraphics[scale=0.13]{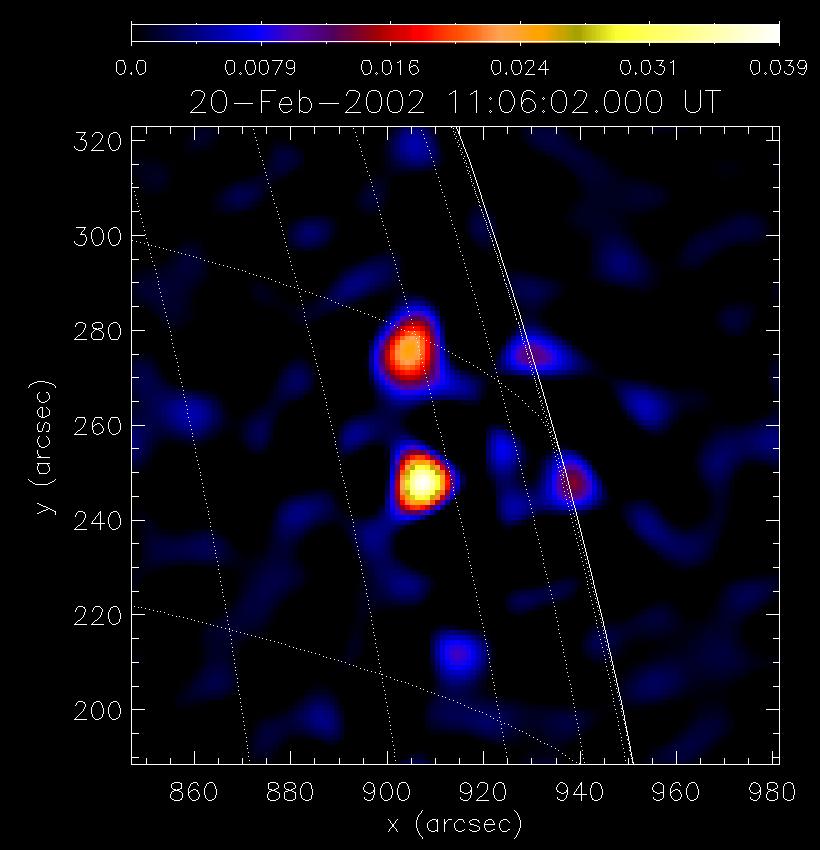} 
    \includegraphics[scale=0.13]{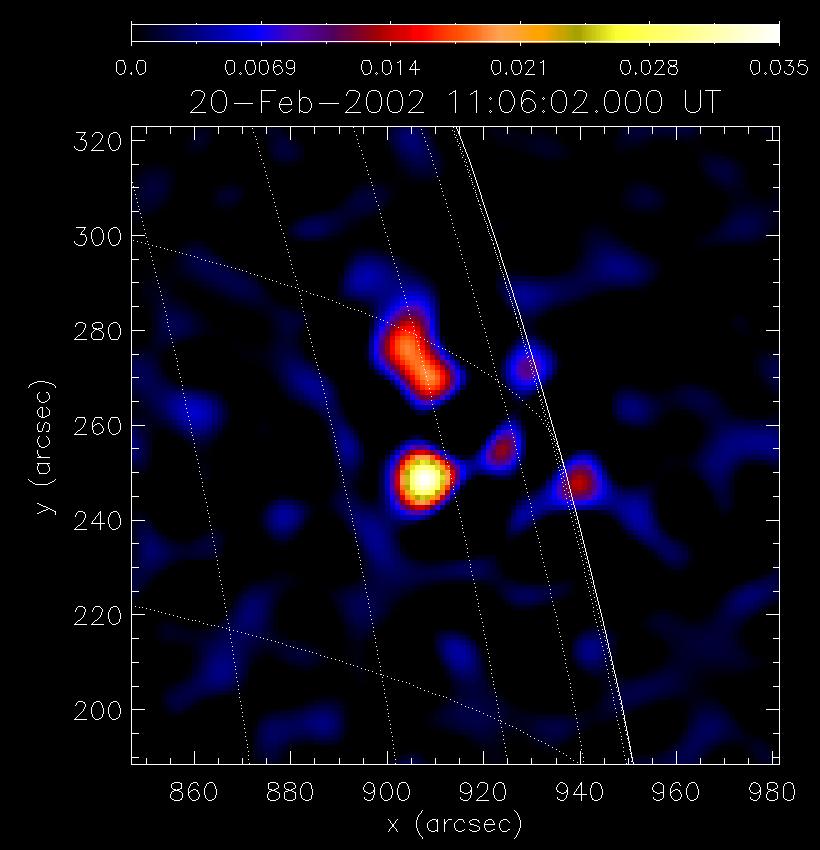} 
    \includegraphics[scale=0.13]{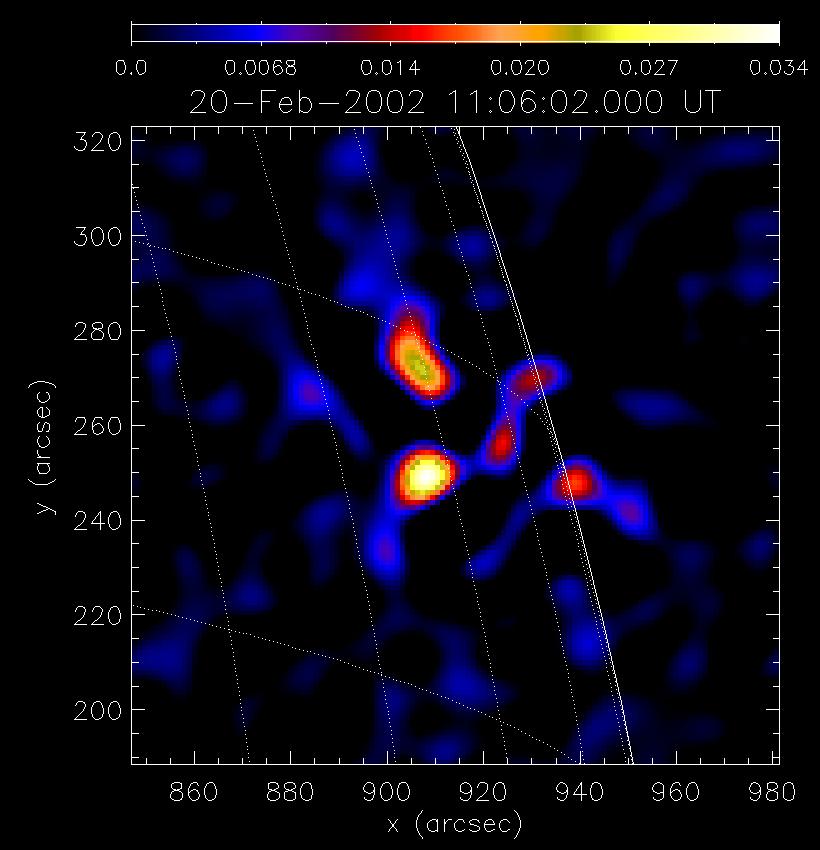}
   \includegraphics[scale=0.13]{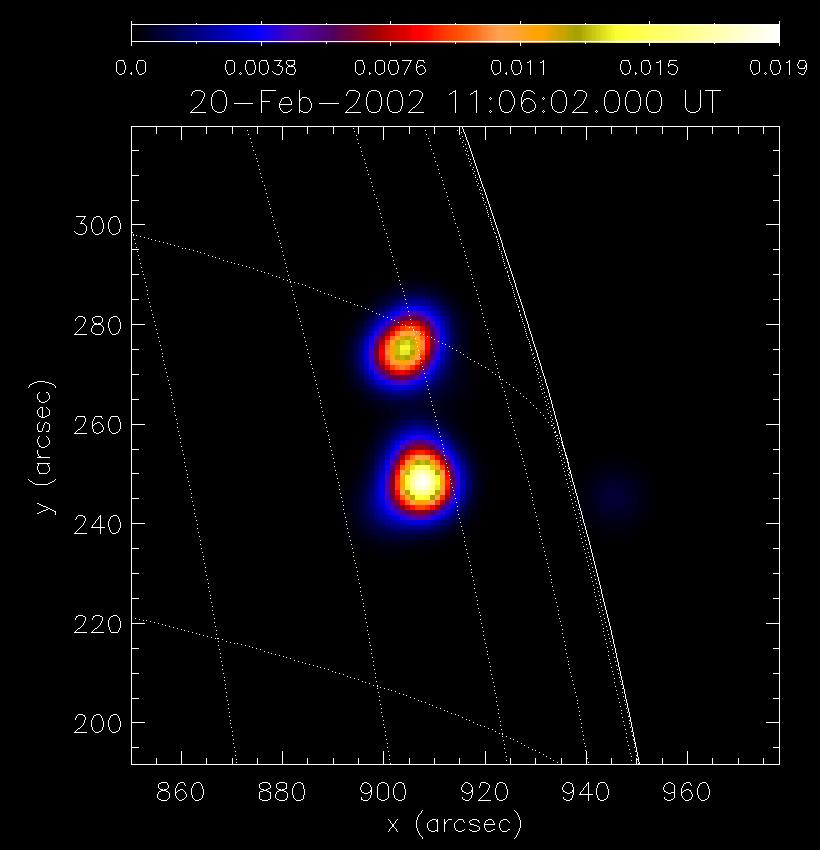}
    \caption{Reconstruction of the flare observed by {\em{RHESSI}} on February 20 in 2002. From left to right: Land-VSK,   Land-RBF, uv$\_$smooth and clean. 
}
    \label{fig5}
\end{figure}

\begin{table}[ht]
\caption{Condition number and spectral ratio of the classical and VSK kernel matrices computed for the {\em{RHESSI}} visibilities, using the data samples of Figure \ref{fig5}.}
\begin{center}
\begin{tabular}{cccc}
\br
cond$(\mK)$ & cond$(\mK^{\Psi})$ &  $S(\mK)$ & $S(\mK^{\Psi})$  \\
\mr
3.338 {\rm e}+05 & 1.753 {\rm e}+05 & 1.003 & 1.006 \\
\br
    \end{tabular}
    \label{tabcond1}
\end{center}    
\end{table}

\section{Comments and conclusions}
\label{conclusions}

The theoretical and data analysis results of this study show that the proposed Fourier inversion scheme, based on VSK interpolation and projected Landweber extrapolation, can be effectively used in many applications. As a case study, we focused on astronomical imaging and we pointed out that the use of VSKs seems to be the key ingredient for dealing with solar flares reconstructions, especially when we do not dispose of well-distributed data. The results on the {\em{STIX}} single and double foot-point flares point out that the classical kernel-based interpolation is sensitive when the peaks in the image plane are close to the boundary. Such shifts directly reflect on the visibility surfaces producing oscillations at the boundary of the visibility domain. Therefore, we need to use a data-driven interpolation via VSKs to make up for the lack of information. 

Finally, to numerically verify Propositions \ref{prop1} and \ref{prop2}, we computed the condition numbers and spectral ratios of the kernel matrices. The results are consistent with what theoretically proven. Future work concerns theoretical studies about the selection of kernel centres via greedy methods (refer e.g. to \cite{WH2013}) and the inclusion of Land-VSK in the {\em{SSW}} tree.

\section*{Acknowledgements}
This research has been accomplished within Rete ITaliana di Approssimazione (RITA). The authors acknowledge the financial contribution from the agreement ASI-INAF n.2018-16-HH.0 and the support of GNCS-IN$\delta$AM.

\section*{References}
\bibliographystyle{iopart-num}
\bibliography{bib_stix.bib}   
 
\end{document}